\documentclass[a4paper]{amsart}
\usepackage{ifthen}
\usepackage[leqno]{amsmath}
\usepackage{enumerate}
\usepackage{amstext,amsbsy,amsopn,amsthm,amsgen}
\usepackage{amsfonts,amscd,amsxtra,upref}
\usepackage{mathrsfs}
\usepackage{euscript}
\usepackage{amssymb}
\usepackage[OT4]{fontenc}

\swapnumbers
\theoremstyle{plain}
\newtheorem{Thm}{Theorem}[section]
\newtheorem{Lem}[Thm]{Lemma}
\newtheorem{Cor}[Thm]{Corollary}
\newtheorem{Pro}[Thm]{Proposition}
\theoremstyle{definition}
\newtheorem{Def}[Thm]{Definition}
\theoremstyle{remark}
\newtheorem{Rem}[Thm]{Remark}

\newenvironment{cor}[1]{\begin{Cor}\label{cor:#1}}{\end{Cor}}
\newenvironment{dfn}[1]{\begin{Def}\label{def:#1}}{\end{Def}}
\newenvironment{lem}[1]{\begin{Lem}\label{lem:#1}}{\end{Lem}}
\newenvironment{pro}[1]{\begin{Pro}\label{pro:#1}}{\end{Pro}}
\newenvironment{rem}[1]{\begin{Rem}\label{rem:#1}}{\end{Rem}}
\newenvironment{thm}[1]{\begin{Thm}\label{thm:#1}}{\end{Thm}}

\ifthenelse{\isundefined{\texorpdfstring}}{\newcommand{\texorpdfstring}[2]{#1}}{}

\newcommand{\CCC}{\mathbb{C}}
\newcommand{\RRR}{\mathbb{R}}
\newcommand{\AAa}{\CMcal{A}}
\newcommand{\BBb}{\CMcal{B}}
\newcommand{\DDd}{\CMcal{D}}
\newcommand{\EEe}{\CMcal{E}}
\newcommand{\HHh}{\CMcal{H}}
\newcommand{\KKk}{\CMcal{K}}
\newcommand{\NNn}{\CMcal{N}}
\newcommand{\RRr}{\CMcal{R}}
\newcommand{\TTt}{\CMcal{T}}
\newcommand{\Aa}{\mathfrak{A}}
\newcommand{\Bb}{\mathfrak{B}}
\newcommand{\Cc}{\mathfrak{C}}
\newcommand{\Ll}{\mathfrak{L}}
\newcommand{\Mm}{\mathfrak{M}}
\newcommand{\Rr}{\mathfrak{R}}
\newcommand{\Ww}{\mathfrak{W}}
\newcommand{\Xx}{\mathfrak{X}}
\newcommand{\Zz}{\mathfrak{Z}}
\newcommand{\bB}{\mathfrak{b}}
\newcommand{\uU}{\mathfrak{u}}

\newcommand{\dd}{\colon}
\newcommand{\df}{\stackrel{\textup{def}}{=}}
\newcommand{\geqsl}{\geqslant}
\newcommand{\leqsl}{\leqslant}
\newcommand{\scalar}[2]{\langle #1,#2\rangle}
\newcommand{\scalarr}{\langle\cdot,\mathrm{-}\rangle}
\newcommand{\Card}{\operatorname{Card}}
\newcommand{\card}{\operatorname{card}}
\newcommand{\tr}{\operatorname{tr}}
\newcommand{\Aff}{\operatorname{Aff}}
\newcommand{\Cyl}{\operatorname{Cyl}}
\newcommand{\cdotaff}{\pmb{\cdot}}
\newcommand{\minusaff}{\pmb{-}}
\newcommand{\plusaff}{\pmb{+}}
\newcommand{\traff}{\tr_{\textup{{Aff}}}}
\newcommand{\tII}{\textup{II}}

\newcommand{\COR}[1]{Corollary~\textup{\ref{cor:#1}}}
\newcommand{\DEF}[1]{Definition~\textup{\ref{def:#1}}}
\newcommand{\LEM}[1]{Lemma~\textup{\ref{lem:#1}}}
\newcommand{\PRO}[1]{Proposition~\textup{\ref{pro:#1}}}
\newcommand{\REM}[1]{Remark~\textup{\ref{rem:#1}}}
\newcommand{\THM}[1]{Theorem~\textup{\ref{thm:#1}}}

\begin{document}

\title[Algebra of affiliated operators]
 {Algebra of operators affiliated\\with a finite type~I von Neumann algebra}
\author[P. Niemiec]{Piotr Niemiec}
\address{P. Niemiec{}\\Instytut Matematyki\\{}Wydzia\l{} Matematyki i~Informatyki\\{}
 Uniwersytet Jagiello\'{n}ski\\{}ul. \L{}ojasiewicza 6\\{}30-348 Krak\'{o}w\\{}Poland}
\email{piotr.niemiec@uj.edu.pl}
\thanks{The first author gratefully acknowledges the assistance of the Polish Ministry
 of Sciences and Higher Education grant NN201~546438 for the years 2010--2013.}
\author[A. Wegert]{Adam Wegert}
\address{A. Wegert{}\\Instytut Matematyki\\{}Wydzia\l{} Matematyki, Fizyki i Chemii\\{}
 Uniwersytet \'{S}l\k{a}ski\\{}ul. Bankowa 14\\{}40-007 Katowice\\{}Poland}
\email{a\_wegert@o2.pl}
\begin{abstract}
It is shown that the $*$-algebra of all (closed densely defined linear) operators affiliated with
a finite type~I von Neumann algebra admits a unique center-valued trace, which turns out to be,
in a sense, normal. It is also demonstrated that for no other von Neumann algebras similar
constructions can be performed.
\end{abstract}
\subjclass[2000]{Primary 46L10; Secondary 47C15.}
\keywords{Finite type~I von Neumann algebra; operator affiliated with a von Neumann algebra;
 center-valued trace.}
\maketitle

\section{Introduction}

With every von Neumann algebra $\Aa$ one can associate the set $\Aff(\Aa)$ of operators (unbounded,
in general) which are \textit{affiliated} with $\Aa$. In \cite{mvn} Murray and von Neumann
discovered that, surprisingly, $\Aff(\Aa)$ turns out to be a unital $*$-algebra when $\Aa$ is
finite. This was in fact the first example of a rich set of \textit{unbounded} operators in which
one can define algebraic binary operations in a natural manner. This concept was later adapted
by Segal \cite{se1,se2} who distinguished a certain class of unbounded operators (namely, measurable
with respect to a fixed normal faithful semi-finite trace) affiliated with an \textit{arbitrary}
semi-finite von Neumann algebra and equipped it with a structure of a $*$-algebra (for
an alternative proof see e.g. \cite{nel} or \S2 in Chapter~IX of \cite{ta2}). A more detailed
investigations of algebras of the form $\Aff(\Aa)$ were initiated by a work of Stone \cite{sto} who
described their models for commutative $\Aa$ in terms of unbounded continuous functions densely
defined on the Gelfand spectrum $\Xx$ of $\Aa$. Much later Kadison \cite{kad} studied this
one-to-one correspondence between operators in $\Aff(\Aa)$ and functions on $\Xx$. Recently Liu
\cite{liu} established an interesting property of $\Aff(\Aa)$ concerning Heisenberg uncertainty
principle. Namely, she showed that the canonical commutation relation, which has the form $AB - BA =
I$, fails to hold for any $A, B \in \Aff(\Aa)$ provided $\Aa$ is finite. We shall obtain her result
for finite \textit{type~I} algebras as a simple corollary of our results. The main purpose
of the paper is to show that whenever $\Aa$ is a finite type~I von Neumann algebra, then
$\Aff(\Aa)$ has a uniquely determined center-valued trace, as shown by

\begin{thm}{main}
Let $\Aa$ be a finite type~I von Neumann algebra and let $\Aff(\Aa)$ be the $*$-algebra of all
operators affiliated with $\Aa$. Then there is a unique linear map $\traff\dd \Aff(\Aa) \to
\Zz(\Aff(\Aa))$ such that
\begin{enumerate}[\upshape(tr1)]
\item $\traff(A)$ is non-negative provided $A \in \Aff(\Aa)$ is so;
\item $\traff(X \cdotaff Y) = \traff(Y \cdotaff X)$ for any $X, Y \in \Aff(\Aa)$;
\item $\traff(Z) = Z$ for each $Z \in \Zz(\Aff(\Aa))$.
\end{enumerate}
What is more,
\begin{equation}\label{eqn:Z}
\Zz(\Aff(\Aa)) = \Aff(\Zz(\Aa))
\end{equation}
and
\begin{enumerate}[\upshape(tr1)]\addtocounter{enumi}{3}
\item $\traff(A) \neq 0$ provided $A \in \Aff(\Aa)$ is non-zero and non-negative;
\item $\traff(X \cdotaff Z) = \traff(X) \cdotaff Z$ for any $X \in \Aff(\Aa)$ and $Z \in
 \Zz(\Aff(\Aa))$;
\item every increasing net $(A_{\sigma})_{\sigma\in\Sigma}$ of self-adjoint members of $\Aff(\Aa)$
 which is majorized by a self-adjoint operator in $\Aff(\Aa)$ has its least upper bound
 in $\Aff(\Aa)$, and
 \begin{equation}\label{eqn:sup}
 \sup_{\sigma\in\Sigma} \traff(A_{\sigma}) = \traff\Bigl(\sup_{\sigma\in\Sigma} A_{\sigma}\Bigr).
 \end{equation}
\end{enumerate}
\end{thm}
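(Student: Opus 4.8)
The plan is to reduce the whole statement to an explicit matricial model of $\Aff(\Aa)$. First I would use the structure theory of finite type~I von Neumann algebras to write $\Aa=\bigoplus_{n\in S}\Aa_n$ with $S\subseteq\{1,2,\dots\}$ and $\Aa_n$ homogeneous of type~$\mathrm{I}_n$, so that $\Aa_n\cong M_n(\NNn_n)$ with $\NNn_n=\Zz(\Aa_n)$ abelian, say $\NNn_n=L^\infty(\mu_n)$ for a localizable measure $\mu_n$. The first genuine task is the identification
\begin{equation*}
\Aff(\Aa)\;\cong\;\prod_{n\in S}M_n\bigl(\Aff(\NNn_n)\bigr)\;=\;\prod_{n\in S}M_n\bigl(L^0(\mu_n)\bigr)
\end{equation*}
as $*$-algebras, the operations being entrywise matrix operations over $L^0(\mu_n)$ in each coordinate and an \emph{unrestricted} product over $n$. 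This relies on three points: (i)~$\Aff$ of a von Neumann direct sum is the full product of the $\Aff$'s of the summands, because every affiliated operator commutes with all central projections of $\Aa$ and hence decomposes accordingly; (ii)~$\Aff(M_n(\NNn))=M_n(\Aff(\NNn))$ for abelian $\NNn$ — this is the place where one must honestly deal with unbounded operators, their domains and closability (an affiliated operator, commuting with $I_n\otimes\NNn'$, has all $n^2$ entries affiliated with $\NNn''=\NNn$, and conversely matrices over $L^0$ close up because $L^0$ is a well-behaved topological $*$-algebra); (iii)~$\Aff(L^\infty(\mu))=L^0(\mu)$, which is classical. Passing to centers in this model gives $\Zz(\Aff(\Aa))=\prod_nL^0(\mu_n)\cdot I_n=\Aff\bigl(\bigoplus_nL^\infty(\mu_n)\bigr)=\Aff(\Zz(\Aa))$, which is \eqref{eqn:Z}.

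For existence I would define, in the model, $\traff\bigl((A_n)_{n\in S}\bigr)=\bigl(n^{-1}\sum_{i=1}^n(A_n)_{ii}\bigr)_{n\in S}$, the normalized matricial trace in each block, which visibly takes values in $\prod_nL^0(\mu_n)\cdot I_n=\Zz(\Aff(\Aa))$. One first records that the order on self-adjoint affiliated operators corresponds in the model to the essentially pointwise order on matrices over $L^0(\mu_n)$; then (tr1)--(tr5) become routine blockwise verifications — positivity of $\tr$ on positive semidefinite matrices gives (tr1); commutativity of $L^0(\mu_n)$ gives $\tr(A_nB_n)=\tr(B_nA_n)$ and lets scalars factor out, giving (tr2) and (tr5); $\tr(zI_n)=nz$ gives (tr3); and (tr4) holds because a nonzero positive semidefinite matrix has strictly positive trace. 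The substantive property is (tr6): given an increasing net of self-adjoint elements majorized by a self-adjoint element, I would work blockwise and use Dedekind completeness of $L^0(\mu_n)$ over a localizable base, together with the reduction of an order-bounded increasing net to a cofinal sequence, to show that the essentially pointwise suprema assemble into an element of $M_n(L^0(\mu_n))$ which is the least upper bound; \eqref{eqn:sup} then follows from the (automatic) normality of the trace on the finite-dimensional $M_n(\CCC)$. Together with the model identification above, this measure-theoretic step is the main obstacle.

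Uniqueness, by contrast, is short and uses only linearity, (tr2) and (tr3) — positivity is not needed. In the model, split an arbitrary $A=(A_n)_{n\in S}$ as $A=Z_A+S_A$ with $Z_A=\bigl(n^{-1}(\tr A_n)I_n\bigr)_n\in\Zz(\Aff(\Aa))$ and $S_A=(S_n)_n$, $S_n=A_n-n^{-1}(\tr A_n)I_n$, so that $\tr S_n=0$ for every $n$. A trace-zero matrix over the commutative ring $L^0(\mu_n)$ is a sum of two commutators, and the defining matrices can be chosen \emph{uniformly in $n$}: conjugating by $\mathrm{diag}(1,2,\dots,n)$ writes the off-diagonal part of $S_n$ as one commutator, while the trace-zero diagonal remainder is the commutator of the nilpotent Jordan block $\sum_{i<n}e_{i,i+1}$ with an explicit subdiagonal matrix. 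Hence $S_A=[X,Y]+[\tilde X,\tilde Y]$ for suitable $X,Y,\tilde X,\tilde Y\in\Aff(\Aa)$, so that any $\tau$ satisfying (tr1)--(tr3) has $\tau(S_A)=\tau([X,Y])+\tau([\tilde X,\tilde Y])=0$ by (tr2) and $\tau(Z_A)=Z_A$ by (tr3); therefore $\tau(A)=Z_A=\traff(A)$, which settles uniqueness.
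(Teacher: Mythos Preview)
Your outline is sound and leads to a correct proof, but it follows a genuinely different route from the paper's.

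The paper never writes down the matricial model $\Aff(\Aa)\cong\prod_n M_n(L^0(\mu_n))$ explicitly. Instead it proves a ``partition'' representation: using Maharam's theorem and the $\bB$-transform $T\mapsto T(I+|T|)^{-1}$, every $T\in\Aff(\Aa)$ is written as $\sum_\lambda c_\lambda S Z_\lambda$ with $S\in\Aa$ and $\{Z_\lambda\}$ a partition of $I$ by central projections (\THM{aff}). The trace is then \emph{defined} by $\traff(\sum_\lambda S_\lambda Z_\lambda)=\sum_\lambda\tr_{\Aa}(S_\lambda)Z_\lambda$, and (tr1)--(tr5) are read off from the corresponding properties of $\tr_{\Aa}$. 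For (tr6) the paper multiplies by central projections to reduce the increasing net to a bounded one inside $\Aa$, then invokes normality of $\tr_{\Aa}$. Your model-theoretic approach is more transparent once the identification $\Aff(M_n(\NNn))=M_n(\Aff(\NNn))$ is in hand; the paper's approach avoids ever having to justify that identification, at the cost of the detour through Maharam and the $\bB$-transform.

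The sharpest difference is in uniqueness. The paper's \LEM{uniq} fixes a partition, shows that any $\tau$ satisfying (tr1)--(tr3) induces a map $f\colon\Aa\to\Zz(\Aa)$ with all the axioms of the center-valued trace on $\Aa$, and concludes $f=\tr_{\Aa}$; this argument genuinely \emph{uses} positivity (tr1), both to get positivity of $f$ and to show $f(S)$ is bounded. Your commutator argument (trace-zero matrices over $L^0(\mu_n)$ as sums of two commutators, assembled uniformly across $n$) needs only linearity, (tr2) and (tr3), so you actually prove a stronger uniqueness statement than the paper does.

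One small caution on (tr6): your phrase ``reduction of an order-bounded increasing net to a cofinal sequence'' is not quite the right mechanism. What you need is the countable sup property of $L^0$ over a localizable base: any order-bounded family has its essential supremum realized by a countable subfamily. For an increasing net this yields an increasing \emph{sequence} with the same supremum, but not in general a cofinal one. With that adjustment the argument goes through.
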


It is worth noting that \eqref{eqn:sup} is a natural counterpart of normality (in the terminology
of Takesaki---see Definition~2.1 in Chapter~V of \cite{ta1}) of center-valued traces in finite
von Neumann algebras. It is natural to ask whether the above result may be generalised to a wider
class of von Neumann algebras (e.g. for all finite). Our second goal is to show that the answer is
negative, which is somewhat surprising. A precise formulation of the result is stated below.
We recall that, in general, the set $\Aff(\Aa)$ admits no structure of a vector space, nevertheless,
it is \textit{always} homogeneous and for any $T \in \Aff(\Aa)$ and $S \in \Aa$ the operator $T + S$
is a (well defined) member of $\Aff(\Aa)$. Based on this observation, we may formulate our result
as follows.

\begin{pro}{main}
Let $\Aa$ be a von Neumann algebra and let $\Aff(\Aa)$ be the set of all operators affiliated with
$\Aa$. Assume there exists a function $\varphi\dd \Aff(\Aa) \to \Aff(\Aa)$ with the following
properties:
\begin{enumerate}[\upshape(a)]
\item if $A, B \in \Aa$ are such that $\varphi(A) \in \Aa$, then $\varphi(\alpha A + \beta B) =
 \alpha \varphi(A) + \beta \varphi(B)$ for any scalars $\alpha, \beta \in \CCC$;
\item $\varphi(A)$ is non-negative provided $A \in \Aff(\Aa)$ is so;
\item if $A \in \Aff(\Aa)$ and $B \in \Aa$ are non-negative, and $\varphi(B) \in \Aa$, then
 $\varphi(A + B) = \varphi(A) + \varphi(B)$;
\item $\varphi(AB) = \varphi(BA)$ for all $A, B \in \Aa$;
\item $\varphi(Z) = Z$ for each $Z \in \Zz(\Aa)$;
\item if $A$ and $U$ are members of $\Aa$ and $U$ is unitary, then $U^* \varphi(A) U = \varphi(A)$.
\end{enumerate}
Then $\Aa$ is finite and type~I.
\end{pro}

The paper is organized as follows. In the next section we establish an interesting property
of finite type~I von Neumann algebras, which is crucial in this paper, since all other results,
apart from \PRO{main}, are its consequences. Its proof involves measure-theoretic techniques, which
is in contrast to all other parts of the paper, where all arguments are, roughly speaking, intrinsic
and algebraic. In Section~3 we establish most relevant properties of the set $\Aff(\Aa)$ (for
a finite type~I algebra $\Aa$), including a new proof of the fact that $\Aff(\Aa)$ admits
a structure of a $*$-algebra. In the fourth part we introduce the center-valued trace on $\Aff(\Aa)$
and prove all items of \THM{main}, apart from (tr6), which is shown in Section~5, where we establish
also other order properties of $\Aff(\Aa)$. The last, sixth, part contains a proof of \PRO{main}.

\subsection*{Notation and terminology}
In this paper $\Aa$ is reserved to denote an arbitrary von Neumann algebra acting on a (complex)
Hilbert space $\HHh$. All \textit{operators} are linear, closed and densely defined in a Hilbert
space, \textit{projections} are orthogonal and \textit{non-negative} operators are, by definition,
self-adjoint. The algebra of all bounded operators on $\HHh$ is denoted by $\BBb(\HHh)$. An operator
$T$ in $\HHh$ is \textit{affiliated} with $\Aa$ if $U T U^{-1} = T$ for any unitary operator $U$
belonging to the commutant $\Aa'$ of $\Aa$. $\Aff(\Aa)$ stands for the set of all operators
affiliated with $\Aa$. If $\Aa$ is finite, $\Aff(\Aa)$ may naturally be equipped with a structure
of a $*$-algebra (see e.g. \cite{mvn}). In that case we denote binary algebraic operations
in $\Aff(\Aa)$ by `$\plusaff$' (for addition), `$\minusaff$' (for subtraction) and `$\cdotaff$' (for
multiplication). For any ring $\Rr$, $\Zz(\Rr)$ stands for the center of $\Rr$ (that is, $\Zz(\Rr)$
consists of all elements of $\Rr$ which commute with any element of $\Rr$). This mainly applies for
$\Rr = \Aa$, and $\Rr = \Aff(\Aa)$ provided $\Aa$ is finite. For any operator $S$, we use $\DDd(S)$,
$\NNn(S)$ and $\RRr(S)$ to denote, respectively, the domain, the kernel and the range of $S$.
By $|S|$ we denote the operator $(S^*S)^{\frac12}$. For any collection $\{T_s\}_{s \in S}$
of operators, $\bigoplus_{s \in S} T_s$ is understood as an operator with a maximal domain defined
naturally; that is, $\oplus_{s \in S} x_s$ belongs to the domain of $\bigoplus_{s \in S} T_s$
if $x_s \in \DDd(T_s)$ for each $s \in S$, and $\sum_{s \in S} \|T_s x_s\|^2 < \infty$ (and,
of course, $(\bigoplus_{s \in S} T_s) (\oplus_{s \in S} x_s) = \oplus_{s \in S} (T_s x_s)$).
The center-valued trace in a finite von Neumann algebra $\Ww$ is denoted by $\tr_{\Ww}$.
The center-valued trace on the algebra of operators affiliated with a finite type~I von Neumann
algebra will be denoted by $\traff$. All vector spaces are assumed to be over the field $\CCC$
of complex numbers. For two $C^*$-algebras $\Cc_1$ and $\Cc_2$, we write $\Cc_1 \cong \Cc_2$ when
$\Cc_1$ and $\Cc_2$ are $*$-isomorphic. The direct product of a collection $\{\Cc_s\}_{s \in S}$
of $C^*$-algebras is denoted by $\prod_{s \in S} \Cc_s$ and it consists of all systems
$(a_s)_{s \in S}$ with $a_s \in \Cc_s$ and $\sup_{s \in S} \|a_s\| < \infty$ (cf.
Definition~II.8.1.2 in \cite{bla}). By $I$ we denote the identity operator on $\HHh$.

\section{Key result}

As we shall see in the sequel, all our main results depend on the following theorem, whose proof is
the purpose of this section.

\begin{thm}{fin1}
Assume $\Aa$ is finite and type~I. Then for any $T \in \Aa$ the following conditions are
equivalent:
\begin{enumerate}[\upshape(a)]
\item $\|T \xi\| < \|\xi\|$ for each non-zero vector $\xi \in \HHh$;
\item there is a sequence $Z_1,Z_2,\ldots \in \Zz(\Aa)$ of mutually orthogonal projections such
 that $\sum_{n=1}^{\infty} Z_n = I$ and $\|T Z_n\| < 1$ for any $n \geqsl 1$.
\end{enumerate}
\end{thm}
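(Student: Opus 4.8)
The plan is to dispatch the trivial implication (b)$\Rightarrow$(a) by a direct computation, and to reduce (a)$\Rightarrow$(b) to a single statement about projections in a finite type~I algebra whose proof is inherently measure-theoretic; that last statement is where I expect all the work to be, and it is also the place where the type~I hypothesis is genuinely used.

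For (b)$\Rightarrow$(a), fix a non-zero $\xi\in\HHh$. Since the $Z_n$ are mutually orthogonal central projections summing to $I$, both $\xi=\sum_n Z_n\xi$ and $T\xi=\sum_n Z_n(T\xi)$ are orthogonal decompositions (the latter because $T\in\Aa$ is bounded and commutes with every $Z_n$), so that $\|\xi\|^2=\sum_n\|Z_n\xi\|^2$ and $\|T\xi\|^2=\sum_n\|TZ_n\xi\|^2$. For each $n$ one has $\|TZ_n\xi\|=\|(TZ_n)(Z_n\xi)\|\leqsl\|TZ_n\|\,\|Z_n\xi\|\leqsl\|Z_n\xi\|$, with the first inequality strict as soon as $Z_n\xi\neq0$; since at least one $Z_{n_0}\xi$ is non-zero, adding the squared estimates yields $\|T\xi\|^2<\|\xi\|^2$.

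For (a)$\Rightarrow$(b), I would first observe that (a) is merely a reformulation of the assertion that $A\df T^*T$ satisfies $A\leqsl I$ and $\NNn(I-A)=\{0\}$ (use $\|T\xi\|^2=\scalar{A\xi}{\xi}$ together with the fact that a non-negative bounded operator, here $I-A$, has vanishing quadratic form precisely on its kernel). Hence the spectral projection $E_{I-A}(\{0\})$ vanishes, so the increasing family of spectral projections $F_k\df E_{I-A}\bigl([\tfrac1k,\infty)\bigr)$ has strong limit $I$; moreover each $F_k$ commutes with $A$ and $AF_k\leqsl(1-\tfrac1k)F_k$. The whole matter then reduces to the following claim about a finite type~I von Neumann algebra: \emph{every increasing sequence $(F_k)$ of projections with strong limit $I$ is dominated, term by term, by an increasing sequence $(Z_k)$ of central projections whose strong limit is again $I$.} Indeed, granting this and picking such $Z_k$ with $Z_k\leqsl F_k$, one gets $AZ_k=Z_k(AF_k)Z_k\leqsl(1-\tfrac1k)Z_k$, so that $W_1\df Z_1$ and $W_k\df Z_k-Z_{k-1}$ $(k\geqsl2)$ are mutually orthogonal central projections with $\sum_k W_k=I$ and $\|TW_k\|^2=\|AW_k\|\leqsl1-\tfrac1k<1$, which is exactly (b) (with the $W_k$ playing the role of the $Z_n$).

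Proving the claim is the heart of the theorem and cannot be purely algebraic: it already fails in a type~$\mathrm{II}_1$ factor, where the center is trivial yet projections can increase to $I$ properly, so finiteness of type~I really is needed. I would argue through the structure theory: $\Aa$ is $*$-isomorphic to a direct product $\prod_n M_n(\mathcal{Z}_n)$, taken over those $n$ for which the type~$I_n$ part is non-zero, with each $\mathcal{Z}_n$ an abelian von Neumann algebra, say $\mathcal{Z}_n\cong L^\infty(X_n,\mu_n)$; correspondingly $\HHh=\bigoplus_n\HHh_n$ with $\HHh_n\cong L^2(X_n,\mu_n)\otimes\CCC^n\otimes\KKk_n$, the $n$-th block of $\Aa$ acting by pointwise matrix multiplication on the first two tensor factors and trivially on the multiplicity space $\KKk_n$. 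In this picture a projection in the $n$-th block is an essentially bounded measurable field $x\mapsto F(x)$ of projections in $M_n(\CCC)$, and its rank function $x\mapsto\tr(F(x))$ is measurable with values in $\{0,1,\dots,n\}$. Writing $\rho_k^{(n)}$ for the rank function of the $n$-th block of $F_k$ and evaluating the strong convergence $F_k\uparrow I$ on vectors of the form $(\chi_{X_0}\otimes v)\otimes w$ with $\mu_n(X_0)<\infty$, monotone convergence gives $F_k(x)\to I_n$, hence $\rho_k^{(n)}(x)\uparrow n$, for $\mu_n$-almost every $x$. Since these values are integers, for a.e.\ $x$ one has $\rho_k^{(n)}(x)=n$, i.e.\ $F_k(x)=I_n$, for all sufficiently large $k$; thus the sets $Y_k^{(n)}\df\{x\in X_n : F_k(x)=I_n\}$ increase, modulo null sets, to $X_n$, and the central projections $Z_k$ whose $n$-th block is multiplication by $\chi_{Y_k^{(n)}}$ are increasing, satisfy $\sup_k Z_k=I$, and obey $Z_k\leqsl F_k$. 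This establishes the claim. The main obstacle is precisely this descent to the concrete model together with the elementary but decisive integrality of ranks of projections, which is what forces the ``for all sufficiently large $k$'' in the last step and hence the centralisation; everything else is routine.
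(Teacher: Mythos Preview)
Your argument is correct and takes a genuinely different route from the paper's.

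The paper establishes (a)$\Rightarrow$(b) by first invoking Maharam's classification of measure algebras to realise $\Aa$ as a product of algebras $L^{\infty}(D_{\alpha_j},m_{\alpha_j},M_{\nu_j})$ over \emph{canonical} measure spaces, and then proving (\PRO{key}) that condition (a) is equivalent to $\|f_j(x)\|<1$ almost everywhere for each coordinate $f_j$ of $\Phi(T)$. The hard direction of \PRO{key} is shown by contradiction: assuming $\|f_k(x)\|=1$ on a set of positive measure, the authors restrict to a countably generated sub-$\sigma$-algebra (this is precisely where the canonical form coming from Maharam's theorem is used), and then apply Effros--Borel measurable selection to produce a non-zero $u$ with $(1-f_k^*f_k)u=0$, contradicting injectivity of $I-T^*T$. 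The central projections $Z_n$ are then carved out of the level sets $\{x:1-2^{1-n}\leqsl\|f_j(x)\|<1-2^{-n}\}$.

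You bypass both Maharam's theorem and measurable selection entirely. By passing to the spectral projections $F_k$ of $I-T^*T$ you reduce everything to an intrinsic lattice-theoretic statement about projections in a finite type~I algebra, and you prove \emph{that} statement by exploiting the elementary fact that ranks of projections in $M_n(\CCC)$ are integers bounded by $n$, so that pointwise increase to the identity forces eventual equality. This is considerably more elementary, and it isolates cleanly the single place where the type~I hypothesis enters. The paper's approach, on the other hand, yields the sharper pointwise characterisation of condition~(a) in \PRO{key} as a by-product, and leads more directly to the flexible \COR{central}.

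Two small points. First, your phrase ``dominated, term by term, by'' is the wrong way around; you want $Z_k\leqsl F_k$, as your subsequent use makes clear. Second, the passage ``correspondingly $\HHh=\bigoplus_n\HHh_n$ with $\HHh_n\cong L^2(X_n,\mu_n)\otimes\CCC^n\otimes\KKk_n$'' tacitly replaces the given representation by a spatial model; this is harmless because your projection claim is representation-independent (it concerns only the lattice structure of $\Aa$), but you should say so explicitly. In the same vein, the monotone-convergence step deducing $F_k(x)\to I_n$ a.e.\ is cleanest if you either take the $\mu_n$ to be finite (which the structure theory allows) or argue intrinsically via the normal centre-valued trace: in the type $I_n$ block one has $\tau(P)\geqsl\frac{1}{n}c(P)$ for every projection $P$, so $I-F_k\downarrow0$ forces $c(I-F_k)\downarrow0$, and then $Z_k\df I-c(I-F_k)$ does the job without ever mentioning a measure space.
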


We shall derive the above theorem as a combination of a classical result on classification
of type~I von Neumann algebras and a measure-theoretic result due to Maharam \cite{mah}.\par
For any positive integer $n$, let $M_n$ be the $C^*$-algebra of all $n \times n$ complex matrices.
Whenever $(X,\Mm,\mu)$ is a finite measure space, we use $L^{\infty}(X,\mu,M_n)$ to denote
the $C^*$-algebra of all $M_n$-valued essentially bounded measurable functions on $X$ (a function
$f = [f_{jk}]\dd X \to M_n$ is \textit{measurable} if each of the functions $f_{jk}\dd X \to \CCC$
is measurable; in other words, $L^{\infty}(X,\mu,M_n) \cong L^{\infty}(X,\mu) \bar{\otimes} M_n$).
The following result is well-known and may easily be derived from Theorems~1.22.13 and 2.3.3
in \cite{sak} (cf. also Theorem~6.6.5 in \cite{kr2}).

\begin{thm}{model1}
For every finite type~I von Neumann algebra $\Aa$ there are a collection
$\{(X_j,\Mm_j,\mu_j)\}_{j \in J}$ of probabilistic measure spaces and a corresponding collection
$\{\nu_j\}_{j \in J}$ of positive integers such that
\begin{equation}\label{eqn:iso1}
\Aa \cong \prod_{j \in J} L^{\infty}(X_j,\mu_j,M_{\nu_j}).
\end{equation}
\end{thm}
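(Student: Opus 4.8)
The plan is to assemble the asserted isomorphism from three standard reductions: split $\Aa$ into homogeneous type~$\mathrm{I}_n$ summands, peel off a full matrix factor $M_n$ from each summand, and represent the remaining abelian algebras as $L^\infty$ of probability spaces. First I would invoke the type decomposition supplied by the cited classification results (\cite{sak}, \cite{kr2}): since $\Aa$ is type~I, comparison theory furnishes mutually orthogonal central projections $(P_n)$, indexed by the multiplicities $n$ that occur, with $\sum_n P_n = I$ and each reduced algebra $\Aa P_n$ homogeneous of type~$\mathrm{I}_n$. Finiteness of $\Aa$ rules out infinite multiplicities, so every such $n$ is a positive integer, and $\Aa \cong \prod_n \Aa P_n$ as a $C^*$-algebraic direct product (the $\ell^\infty$-direct sum of the summands).

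Next I would split a single homogeneous summand $\Bb \df \Aa P_n$ (with $n < \infty$). Homogeneity of multiplicity $n$ provides $n$ mutually orthogonal, pairwise equivalent abelian projections, each of central carrier $P_n$, that sum to $P_n$; their associated matrix units $\{e_{ij}\}_{i,j=1}^n$ satisfy $e_{ij}e_{kl} = \delta_{jk}e_{il}$, $\sum_i e_{ii} = P_n$, and $e_{11}\Bb e_{11} = \Zz(\Bb)e_{11}$. The normal $*$-homomorphism $\Zz(\Bb) \bar{\otimes} M_n \to \Bb$ sending $z \otimes e_{ij}^{(0)} \mapsto z\,e_{ij}$ (with $e_{ij}^{(0)}$ the canonical matrix units of $M_n$) is then an isomorphism, its inverse sending $b$ to $\sum_{i,j} z_{ij} \otimes e_{ij}^{(0)}$, where $z_{ij} \in \Zz(\Bb)$ is determined by $e_{1i}\,b\,e_{j1} = z_{ij}e_{11}$. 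Hence $\Aa P_n \cong \Zz(\Aa P_n) \bar{\otimes} M_n$ with $\Zz(\Aa P_n)$ abelian.

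It remains to represent each abelian center. An abelian von Neumann algebra is $*$-isomorphic to $L^\infty$ of a localizable measure space, and such a space can be taken strictly localizable, i.e.\ a disjoint union of finite-measure pieces; normalizing each (nonzero) piece to total mass $1$---which leaves $L^\infty$ unchanged---yields $\Zz(\Aa P_n) \cong \prod_{j \in J_n} L^\infty(X_j,\mu_j)$ for probability spaces $(X_j,\Mm_j,\mu_j)$. Since $L^\infty(X_j,\mu_j) \bar{\otimes} M_n \cong L^\infty(X_j,\mu_j,M_n)$ and tensoring with the fixed finite-dimensional $M_n$ commutes with direct products, substituting back and relabelling along $J \df \bigsqcup_n J_n$ (with $\nu_j \df n$ for $j \in J_n$) delivers $\Aa \cong \prod_{j \in J} L^\infty(X_j,\mu_j,M_{\nu_j})$.

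I expect the main obstacle to be the measure-theoretic step: passing from a generic localizable representing measure to a genuine \emph{product of probability} spaces, and confirming that the normalizing rescalings affect neither the $L^\infty$-algebras nor the $M_n$ tensor factors. This is where localizability (maximality of the underlying measure algebra) is genuinely used; by contrast the homogeneous decomposition and the matrix-unit splitting are textbook structure theory, invoked essentially verbatim.
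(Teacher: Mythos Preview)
Your argument is correct and follows the standard textbook route (homogeneous type~$\mathrm{I}_n$ decomposition, matrix-unit splitting $\Bb \cong \Zz(\Bb)\bar{\otimes} M_n$, and the localizable-measure representation of abelian $W^*$-algebras). The paper itself does not give a proof of this statement at all: it simply records the result as well-known and points to Theorems~1.22.13 and 2.3.3 of \cite{sak} and Theorem~6.6.5 of \cite{kr2}, so what you have written is precisely the kind of derivation those citations are meant to stand in for.
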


We need a slight modification of \eqref{eqn:iso1} (see \THM{model2} below). To this end, let us
introduce certain classical measure spaces, which we call \textit{canonical}. Let $\alpha$ be
an infinite cardinal and $S_{\alpha}$ be a fixed set of cardinality $\alpha$. We consider the set
$D_{\alpha} = \{0,1\}^{S_{\alpha}}$ (of all functions from $S_{\alpha}$ to $\{0,1\}$) equipped with
the \textit{product} $\sigma$-algebra $\Mm_{\alpha}$ and the \textit{product} probabilistic measure
$m_{\alpha}$; that is, $\Mm_{\alpha}$ coincides with the $\sigma$-algebra on $D_{\alpha}$ generated
by all sets of the form
\begin{equation}\label{eqn:cyl}
\Cyl(G) \df \{u \in D_{\alpha}\dd\ u\bigr|_F \in G\}
\end{equation}
where $F$ is a finite subset of $S_{\alpha}$ and $G$ is any subset of $\{0,1\}^F$, while
$m_{\alpha}$ is a unique probabilistic measure on $\Mm_{\alpha}$ such that $m_{\alpha}(\Cyl(G)) =
\card(G)/2^{\card(F)}$ for any such sets $G$ and $F$. It is worth noting that when $\alpha$ is
uncountable and $D_{\alpha}$ is considered with the product topology, not every open set
in $D_{\alpha}$ belongs to $\Mm_{\alpha}$. Additionally, we denote by $(D_0,\Mm_0,m_0)$ a unique
probabilistic measure space with $D_0 = \{0\}$. For simplicity, let $\Card_{\infty}$ stand for
the class of all infinite cardinal numbers. We call the measure spaces
$(D_{\alpha},\Mm_{\alpha},m_{\alpha})$ with $\alpha \in \Card_{\infty} \cup \{0\}$
\textit{canonical}. In the sequel we shall apply the following consequence of a deep result due
to Maharam \cite{mah}:

\begin{thm}{mah}
For any probabilistic measure space $(X,\Mm,\mu)$ there is a sequence \textup{(}finite
or not\textup{)} $\alpha_1,\alpha_2,\ldots \in \Card_{\infty} \cup \{0\}$ such that
the $C^*$-algebras $L^{\infty}(X,\mu)$ and $\prod_{n\geqsl1} L^{\infty}(D_{\alpha_n},m_{\alpha_n})$
are $*$-isomorphic.
\end{thm}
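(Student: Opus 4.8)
The plan is to combine Maharam's structure theorem with a standard decomposition of measure algebras. First I would recall Maharam's theorem in its classical form: the measure algebra of any probability space decomposes (in a suitable sense) as a countable ``direct sum'' of homogeneous pieces, where a homogeneous measure algebra of Maharam type $\alpha$ (with $\alpha$ an infinite cardinal) is measure-algebra isomorphic to that of the product space $D_\alpha=\{0,1\}^{S_\alpha}$ with its product measure $m_\alpha$, and a homogeneous piece of ``finite type'' is just an atom, whose $L^\infty$ is $\CCC=L^\infty(D_0,m_0)$. Concretely, there is a countable (finite or infinite) family of pairwise disjoint measurable sets $X_n\subseteq X$, of positive measure, with $X=\bigcup_n X_n$ up to a null set, such that each measure algebra of $(X_n,\mu|_{X_n})$ (renormalised) is homogeneous of some Maharam type $\alpha_n\in\Card_\infty\cup\{0\}$. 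This is exactly the content of, e.g., the decomposition in Maharam \cite{mah}; I would cite it and not reprove it.

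Next I would translate this from measure algebras to the $C^*$-algebra level. The key elementary facts are: (i) $L^\infty(X,\mu)$ depends only on the measure algebra of $(X,\mu)$, so homogeneity of type $\alpha_n$ gives a $*$-isomorphism $L^\infty(X_n,\mu|_{X_n})\cong L^\infty(D_{\alpha_n},m_{\alpha_n})$ for $\alpha_n$ infinite, and $L^\infty(X_n,\mu|_{X_n})\cong\CCC=L^\infty(D_0,m_0)$ when $X_n$ is an atom; (ii) a countable partition $X=\bigsqcup_n X_n$ into sets of positive measure induces a $*$-isomorphism
\begin{equation*}
L^\infty(X,\mu)\;\cong\;\prod_{n\geqsl 1} L^\infty(X_n,\mu|_{X_n}),
\end{equation*}
via $f\mapsto (f|_{X_n})_n$, where the product is the $C^*$-direct product (bounded systems) as defined in the Notation section. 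Here one uses that a bounded measurable function on $X$ is determined by, and can be reassembled from, its restrictions to the pieces, and that the essential-supremum norm on $X$ is the supremum of the essential-supremum norms on the pieces — which is where the $\sup_n\|a_n\|<\infty$ clause in the definition of $\prod$ is exactly what is needed. Composing (i) and (ii) yields
\begin{equation*}
L^\infty(X,\mu)\;\cong\;\prod_{n\geqsl 1} L^\infty(D_{\alpha_n},m_{\alpha_n}),
\end{equation*}
which is the assertion.

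The main obstacle is purely one of bookkeeping and citation hygiene rather than genuine mathematics: Maharam's original paper states the decomposition in the language of measure algebras (complete Boolean algebras with a measure), whereas here everything must be phrased for the commutative von Neumann / $C^*$-algebra $L^\infty(X,\mu)$. So the careful point is to make precise the dictionary ``measure algebra isomorphism $\Longleftrightarrow$ $*$-isomorphism of $L^\infty$'' — this is classical (the measure algebra is recovered as the projections of $L^\infty$, and a $*$-isomorphism of the von Neumann algebras restricts to an isomorphism of projection lattices carrying normalised traces to normalised traces, hence measures to measures) — and to check that the decomposition is genuinely \emph{countable}, which is Maharam's theorem and not automatic. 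A secondary subtlety is the case distinction for atoms: an atom contributes the one-dimensional algebra, which the statement accommodates by allowing $\alpha_n=0$ and setting $D_0=\{0\}$. Once these identifications are in place the proof is the two displayed isomorphisms above, so I would keep it short, citing \cite{mah} for the hard analytic input and treating (i)–(ii) as routine.
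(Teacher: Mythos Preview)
Your proposal is correct and matches the paper's approach: the paper does not give a proof either, but merely states that the result ``may simply be deduced from Theorems~1 and 2'' in Maharam \cite{mah}, which is precisely the decomposition into countably many homogeneous pieces that you invoke. Your sketch is in fact more detailed than what the paper provides, and the bookkeeping points you flag (measure-algebra versus $L^\infty$ language, countability, handling of atoms via $\alpha_n=0$) are exactly the right ones.
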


The above result is not explicitly stated in \cite{mah}, but may simply be deduced from Theorems~1
and 2 included there.\par
As a consequence of Theorems~\ref{thm:model1} and \ref{thm:mah} (and the fact that
$L^{\infty}(X,\mu,M_n) \cong L^{\infty}(X,\mu) \bar{\otimes} M_n$), we obtain

\begin{thm}{model2}
For every finite type~I von Neumann algebra $\Aa$ there are collections $\{\alpha_j\}_{j \in J}
\subset \Card_{\infty} \cup \{0\}$ and $\{\nu_j\}_{j \in J} \subset \{1,2,\ldots\}$ such that
\begin{equation*}
\Aa \cong \prod_{j \in J} L^{\infty}(D_{\alpha_j},m_{\alpha_j},M_{\nu_j}).
\end{equation*}
\end{thm}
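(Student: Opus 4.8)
The plan is to substitute the concrete measure spaces furnished by \THM{model1} into \THM{mah} and then collapse the resulting iterated direct products. First, \THM{model1} gives a $*$-isomorphism $\Aa \cong \prod_{j \in J} L^{\infty}(X_j,\mu_j,M_{\nu_j})$ for suitable probabilistic measure spaces $(X_j,\Mm_j,\mu_j)$ and positive integers $\nu_j$. Applying \THM{mah} separately to each space $(X_j,\mu_j)$ yields, for every $j \in J$, a finite or infinite sequence $\alpha^{(j)}_1,\alpha^{(j)}_2,\ldots \in \Card_{\infty} \cup \{0\}$ together with a $*$-isomorphism $L^{\infty}(X_j,\mu_j) \cong \prod_{n\geqsl1} L^{\infty}(D_{\alpha^{(j)}_n},m_{\alpha^{(j)}_n})$.

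Next I would tensor by the (finite-dimensional) matrix algebra $M_{\nu_j}$. Since $\Cc \bar{\otimes} M_n$ is canonically the algebra $M_n(\Cc)$ of $n \times n$ matrices over $\Cc$, and passing to matrix algebras commutes with $C^*$-direct products (that is, $M_n(\prod_s \Cc_s) \cong \prod_s M_n(\Cc_s)$, which is immediate from the description of the direct product recalled in the notation section), the identity $L^{\infty}(X,\mu,M_n) \cong L^{\infty}(X,\mu) \bar{\otimes} M_n$ recalled above gives
\[
L^{\infty}(X_j,\mu_j,M_{\nu_j}) \cong \Bigl(\prod_{n\geqsl1} L^{\infty}(D_{\alpha^{(j)}_n},m_{\alpha^{(j)}_n})\Bigr) \bar{\otimes} M_{\nu_j} \cong \prod_{n\geqsl1} L^{\infty}(D_{\alpha^{(j)}_n},m_{\alpha^{(j)}_n},M_{\nu_j}).
\]

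Finally I would assemble these. Substituting into the isomorphism coming from \THM{model1} produces $\Aa \cong \prod_{j \in J} \prod_{n\geqsl1} L^{\infty}(D_{\alpha^{(j)}_n},m_{\alpha^{(j)}_n},M_{\nu_j})$, an iterated $C^*$-direct product. Such iterated products collapse isometrically: a bounded system, indexed by $j$, of bounded systems indexed by $n$ is the same thing as a bounded system indexed by the disjoint union of the index sets, because $\sup_j \sup_n \|a_{j,n}\| = \sup_{(j,n)} \|a_{j,n}\|$. Hence we may rewrite the above as a single direct product over the set $J'$ of all pairs $(j,n)$ occurring, and then relabel $\alpha_{(j,n)} \df \alpha^{(j)}_n$ and $\nu_{(j,n)} \df \nu_j$ (and rename $J'$ to $J$) to obtain the asserted form. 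There is no genuine obstacle here; the only points requiring (routine) care are that tensoring by a finite-dimensional algebra commutes with $C^*$-direct products and that nested $C^*$-direct products collapse, both of which follow at once from the definition of the direct product given in the notation section.
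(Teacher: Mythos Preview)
Your argument is correct and is precisely the approach the paper has in mind: the paper states the theorem as an immediate consequence of \THM{model1} and \THM{mah} together with the identification $L^{\infty}(X,\mu,M_n) \cong L^{\infty}(X,\mu) \bar{\otimes} M_n$, and you have simply spelled out the routine details of combining these (distributing $M_{\nu_j}$ over the direct product and collapsing the iterated product).
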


The following simple lemma will also prove useful for us.

\begin{lem}{1-1}
If $T$ is an arbitrary member of $\Aa$, then $\NNn(T) = \{0\}$ iff the mapping
\begin{equation}\label{eqn:1-1}
\Aa \ni X \mapsto TX \in \Aa
\end{equation}
is one-to-one.
\end{lem}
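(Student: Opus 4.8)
The plan is to prove both implications directly, using only elementary operator-theoretic facts about von Neumann algebras. First I would settle the easy direction: if $\NNn(T) \neq \{0\}$, pick a non-zero vector $\xi$ with $T\xi = 0$, and let $P$ be the orthogonal projection onto $\overline{\CCC\xi}$, which is a rank-one element of $\BBb(\HHh)$ but not necessarily of $\Aa$. To stay inside $\Aa$, I would instead consider the range projection $E$ of $T^*$, equivalently the projection onto $\overline{\RRr(T^*)} = \NNn(T)^{\perp}$; this $E$ lies in $\Aa$ since $T \in \Aa$, and $I - E \neq 0$ because $\NNn(T) \neq \{0\}$. Then $T(I-E) = 0$ while $I - E \neq 0$, so the map $X \mapsto TX$ kills the non-zero element $I-E$ and is not one-to-one. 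Thus (injectivity of \eqref{eqn:1-1}) $\Rightarrow$ $\NNn(T) = \{0\}$.

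For the converse, suppose $\NNn(T) = \{0\}$ and that $TX = 0$ for some $X \in \Aa$; I want $X = 0$. Since $TX = 0$, the range of $X$ is contained in $\NNn(T) = \{0\}$, hence $X\eta = 0$ for every $\eta \in \HHh$, so $X = 0$ outright. Actually this shows something slightly stronger than injectivity at $0$, but since \eqref{eqn:1-1} is a linear map between vector spaces, injectivity is equivalent to trivial kernel, so we are done: $TX = TY$ gives $T(X-Y) = 0$, hence $X - Y = 0$.

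I expect there to be essentially no obstacle here — the lemma is genuinely elementary. The only point requiring a moment's care is making sure the projection used in the first implication actually belongs to $\Aa$ rather than merely to $\BBb(\HHh)$; this is handled by taking the range projection of $T^*$ (equivalently, the projection onto $\overline{\RRr(T^*)}$), which is a standard construction landing in $\Aa$ whenever $T \in \Aa$. No appeal to the type~I or finiteness hypotheses on $\Aa$ is needed; the statement holds for an arbitrary von Neumann algebra, as the notation $\Aa$ in the lemma already suggests.
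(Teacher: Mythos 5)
Your proof is correct and follows essentially the same route as the paper: the easy direction is identical, and for the other direction you exhibit the projection onto $\NNn(T)$ as a non-zero element of $\Aa$ annihilated by left multiplication by $T$ --- you obtain it as $I-E$ with $E$ the range projection of $T^*$, whereas the paper takes the spectral projection $E(\{0\})$ of $|T|$, which is the same operator. No gap; the remark that neither finiteness nor type~I is needed is also consistent with the paper, which states the lemma for an arbitrary von Neumann algebra.
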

\begin{proof}
If $\NNn(T) = \{0\}$ and $TX = 0$, then $\RRr(X) \subset \NNn(T) = \{0\}$ and hence $X = 0$. For
the converse, let $E\dd \Bb(\RRR) \to \BBb(\HHh)$ be the spectral measure of $|T|$, defined
on the $\sigma$-algebra $\Bb(\RRR)$ of all Borel subsets of $\RRR$. Then $E(\sigma) \in \Aa$ for any
Borel set $\sigma \subset \RRR$. Since $T E(\{0\}) = 0$, we conclude from the injectivity
of \eqref{eqn:1-1} that $E(\{0\}) = 0$ and thus $\NNn(T) = \NNn(|T|) = \{0\}$.
\end{proof}

\begin{pro}{key}
Let $\Aa$ be a finite type~I von Neumann algebra. Let $\{\alpha_j\}_{j \in J}$ and
$\{\nu_j\}_{j \in J}$ be two collections as in the assertion of \THM{model2}. Further, let $\Phi\dd
\Aa \to \prod_{j \in J} L^{\infty}(D_{\alpha_j},m_{\alpha_j},M_{\nu_j})$ be any $*$-isomorphism.
For an arbitrary operator $T$ in $\Aa$ and $(f_j)_{j \in J} \df \Phi(T)$, the following conditions
are equivalent:
\begin{enumerate}[\upshape(a)]
\item $\|T \xi\| < \|\xi\|$ for each non-zero vector $\xi \in \HHh$;
\item for each $j \in J$, the set $\{x \in D_{\alpha_j}\dd\ \|f_j(x)\| < 1\}$ is of full measure
 $m_{\alpha_j}$.
\end{enumerate}
\end{pro}
\begin{proof}
Assume first that (b) holds. Observe that then $\|T\| = \|\Phi(T)\| \leqsl 1$ and $\Phi((I-T^* T)S)
= (1 - \Phi(T)^* \Phi(T)) \Phi(S) \neq 0$ for any non-zero operator $S \in \Aa$. So, \LEM{1-1}
ensures us that $I-T^*T$ is one-to-one. Consequently, (a) is fulfilled.\par
Now assume that (a) is satisfied, or, equivalently, that $I - T^* T \geqsl 0$ and the mapping $\Aa
\ni X \mapsto (I-T^* T) X \in \Aa$ is one-to-one. This means that
\begin{equation}\label{eqn:norm1}
1-\Phi(T)^* \Phi(T) \geqsl 0
\end{equation}
and $(1-\Phi(T)^* \Phi(T)) g \neq 0$ for each non-zero $g \in \Ll \df \prod_{j \in J}
L^{\infty}(D_{\alpha_j},m_{\alpha_j},M_{\nu_j})$. Suppose, on the contrary, that
\begin{equation}\label{eqn:fk}
m_{\alpha_k}(\{x \in D_{\alpha_k}\dd\ \|f_k(x)\| < 1\}) < 1
\end{equation}
for some $k \in J$. To get a contradiction, it is enough to find a bounded measurable function $u\dd
D_{\alpha_k} \to M_{\nu_k}$ such that $u$ is a non-zero vector
in $L^{\infty}(D_{\alpha_k},m_{\alpha_k},M_{\nu_k})$ and $(1 - f_k^* f_k) u = 0$ (because then
it suffices to put $g_k = u$ and $g_j = 0$ for $j \neq k$ in order to obtain a non-zero vector
$g \df  (g_j)_{j \in J} \in \Ll$ for which $(1 - \Phi(T)^* \Phi(T)) g = 0$). It is now a moment
when we shall make use of the form of the measurable space $(D_{\alpha_k},\Mm_{\alpha_k})$.
If $\alpha_k = 0$, the existence of $u$ is trivial. We therefore assume that $\alpha_k$ is infinite.
Since $f_k\dd \{0,1\}^{S_{\alpha_k}} \to M_{\nu_k}$ is measurable and $\Mm_{\alpha_k}$ is
the product $\sigma$-algebra, we conclude that there exist a \textbf{countable} infinite set
$F \subset S_{\alpha_k}$ and a measurable function $f\dd \{0,1\}^F \to M_{\nu_k}$ such that
\begin{equation}\label{eqn:f}
f_k(\eta) = f(\eta\bigr|_F)
\end{equation}
for any $\eta \in \{0,1\}^{S_{\alpha_k}}$. For simplicity, we put $\Omega = \{0,1\}^F$, $\Mm =
\{G \subset \Omega\dd\ \Cyl(G) \in \Mm_{\alpha_k}\}$ (cf. \eqref{eqn:cyl}) and a measure $\lambda\dd
\Mm \to [0,1]$ by $\lambda(G) = m_{\alpha_k}(\Cyl(G))$ for any $G \in \Mm$. Note that
the probabilistic measure space $(\Omega,\Mm,\lambda)$ is naturally \textit{isomorphic}
to $(D_{\aleph_0},\Mm_{\aleph_0},m_{\aleph_0})$ and hence it is a standard measure space (which is
relevant for us). It suffices to find a measurable bounded function $v\dd \Omega \to M_n$ such that
$v$ is a non-zero vector in $L^{\infty}(\Omega,\lambda,M_n)$ and
\begin{equation}\label{eqn:v}
(1 - f^* f) v \equiv 0
\end{equation}
(because then $u$ may be defined by $u(\eta) \df v(\eta\bigr|_F)$). Let $G \df \{\omega \in
\Omega\dd\ \|f(\omega)\| = 1\} (\in \Mm)$. It follows from \eqref{eqn:norm1}, \eqref{eqn:fk} and
\eqref{eqn:f} that
\begin{equation}\label{eqn:>0}
\lambda(G) > 0.
\end{equation}
Since we deal with (finite-dimensional) matrices, we see that
\begin{equation}\label{eqn:ker}
\forall \omega \in G\dd\ \NNn(I - f(\omega)^* f(\omega)) \neq \{0\}.
\end{equation}
Now we consider a multifunction $\Psi$ on $\Omega$ which assigns to each $\omega \in \Omega$
the kernel of $I - f(\omega)^* f(\omega)$. Equipping the set of all linear subspaces of $\CCC^n$
with the Effros-Borel structure (see \cite{ef1,ef2} or \S6 in Chapter~V in \cite{ta1} and Appendix
there), we conclude that $\Psi$ is measurable (this is a kind of folklore; it may also be simply
deduced e.g. from a combination of Proposition~2.4 in \cite{ern} and Corollary~A.18 in \cite{ta1}).
So, it follows from Effros' theory that there exist measurable functions $h_1,h_2,\ldots\dd \Omega
\to \CCC^n$ such that the set $\{h_k(\omega)\dd\ k \geqsl 1\}$ is a dense subset of $\Psi(\omega)$
for each $\omega \in \Omega$ (to convince of that, consult e.g. subsection~A.16 of Appendix
in \cite{ta2}). We infer from \eqref{eqn:>0} that there is $k \geqsl 1$ such that the set $D \df
\{\omega \in G\dd\ h_k(\omega) \neq 0\}$ has positive measure $\lambda$. Finally, we define $v\dd
\Omega \to M_n$ as follows: for $\omega \in D$, $v(\omega)$ is the matrix which corresponds
(in the canonical basis of $\CCC^n$) to a linear operator
\begin{equation*}
\CCC^n \ni \xi \mapsto \frac{\scalar{\xi}{h_k(\omega)}}{\scalar{h_k(\omega)}{h_k(\omega)}}
h_k(\omega) \in \CCC^n
\end{equation*}
(where $\scalarr$ denotes the standard inner product in $\CCC^n$), and $v(\omega) = 0$ otherwise.
It is readily seen that $v$ is measurable and bounded. What is more, since $\lambda(D) > 0$, we see
that $v$ is a non-zero element of $L^{\infty}(\Omega,\lambda,M_n)$. Finally, \eqref{eqn:v} holds,
because $h_k(\omega) \in \Psi(\omega) = \NNn(I - f^*(\omega) f(\omega))$ for each $\omega$. This
completes the proof.
\end{proof}

Now we are ready to give

\begin{proof}[Proof of \THM{fin1}]
It is clear that (b) is followed by (a). Now assume (a) holds. Let collections
$\{\alpha_j\}_{j \in J}$ and $\{\nu_j\}_{j \in J}$ and a $*$-isomorphism
\begin{equation*}
\Phi\dd \Aa \to \prod_{j \in J} L^{\infty}(D_{\alpha_j},m_{\alpha_j},M_{\nu_j})
\end{equation*}
be as in \PRO{key}. Define $(f_j)_{j \in J} \in \Ll \df \prod_{j \in J}
L^{\infty}(D_{\alpha_j},m_{\alpha_j},M_{\nu_j})$ as $\Phi(T)$. We infer from \PRO{key} that for any
$j \in J$,
\begin{equation}\label{eqn:j}
m_{\alpha_j}(\{x \in D_{\alpha_j}\dd\ \|f_j(x)\| < 1\}) = 1.
\end{equation}
We put $W_{j,n} = \{x \in D_{\alpha_j}\dd\ 1 - 2^{1-n} \leqsl \|f_j(x)\| < 1 - 2^{-n}\}$ and let
$w_{j,n} \in L^{\infty}(D_{\alpha_j},m_{\alpha_j},M_{\nu_j})$ be (constantly) equal to the unit
$\nu_j \times \nu_j$ matrix on $W_{j,n}$ and $0$ off $W_{j,n}$. Observe that $w_n \df
(w_{j,n})_{j \in J}$ is a central projection in $\Ll$ and $\sum_{n=1}^{\infty} w_n = 1$ (thanks
to \eqref{eqn:j}). What is more, it follows from the definition of the sets $W_{j,n}$'s that
$\|\Phi(T) w_n\| \leqsl 1 - 2^{-n}$ for any $n \geqsl 1$. Thus, it remains to define $Z_n$
as $\Phi^{-1}(w_n)$ to finish the proof.
\end{proof}

For simplicity, let us introduce

\begin{dfn}{partition}
A \textit{partition} (in $\Aa$) is an arbitrary collection $\{Z_s\}_{s \in S}$ of mutually
orthogonal projections such that $\sum_{s \in S} Z_s = I$ and $Z_s \in \Zz(\Aa)$ for any $s \in S$.
\end{dfn}

In the sequel we shall need a strengthening of \THM{fin1} stated below. Since its proof is a slight
modification of the argument used in the proof of \THM{fin1}, we skip it and leave it to the reader.

\begin{cor}{central}
Let $\Lambda$ be a countable infinite set of indices and $\{a_{\lambda}\dd\ \lambda \in \Lambda\}$
be a set of positive real numbers such that
\begin{equation}\label{eqn:sup1}
\sup\{a_{\lambda}\dd\ \lambda \in \Lambda\} = 1.
\end{equation}
For $T \in \Aa$, the following conditions are equivalent:
\begin{enumerate}[\upshape(a)]
\item $\|T \xi\| < \|\xi\|$ for every non-zero vector $\xi \in \HHh$;
\item there exists a partition $\{Z_{\lambda}\}_{\lambda\in\Lambda}$ such that $\|T Z_{\lambda}\|
 \leqsl a_{\lambda}$ for every $\lambda \in \Lambda$.
\end{enumerate}
\end{cor}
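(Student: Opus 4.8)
The plan is to repeat the proof of \THM{fin1} with only cosmetic changes: the dyadic values $1-2^{-n}$ occurring there are replaced by a suitable subsequence of the prescribed numbers $a_\lambda$. I read the hypothesis as including $a_\lambda<1$ for every $\lambda$ (together with $\sup_\lambda a_\lambda=1$); this is the situation of \THM{fin1} and the one in which the equivalence holds.

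The implication (b)$\Rightarrow$(a) requires neither type~I nor finiteness. Given a partition $\{Z_\lambda\}_{\lambda\in\Lambda}$ with $\|TZ_\lambda\|\leqsl a_\lambda$ and a non-zero $\xi\in\HHh$, write $\xi=\sum_\lambda Z_\lambda\xi$. Since $T\in\Aa$ commutes with every central projection, the vectors $TZ_\lambda\xi=Z_\lambda T\xi$ lie in the pairwise orthogonal subspaces $Z_\lambda\HHh$, so
\[ \|T\xi\|^2=\sum_\lambda\|TZ_\lambda\xi\|^2=\sum_\lambda\|TZ_\lambda(Z_\lambda\xi)\|^2\leqsl\sum_\lambda a_\lambda^2\|Z_\lambda\xi\|^2. \]
As $\xi\neq0$, some $Z_{\lambda_0}\xi$ is non-zero, and since $a_{\lambda_0}<1$ the corresponding term is strictly smaller than $\|Z_{\lambda_0}\xi\|^2$; hence $\|T\xi\|^2<\sum_\lambda\|Z_\lambda\xi\|^2=\|\xi\|^2$, which is (a).

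For (a)$\Rightarrow$(b) I would pass to the model of \THM{model2}: fix a $*$-isomorphism $\Phi\dd\Aa\to\Ll\df\prod_{j\in J}L^{\infty}(D_{\alpha_j},m_{\alpha_j},M_{\nu_j})$ and set $(f_j)_{j\in J}\df\Phi(T)$. By \PRO{key}, condition (a) is equivalent to $m_{\alpha_j}(\{x\in D_{\alpha_j}\dd\ \|f_j(x)\|<1\})=1$ for every $j\in J$. Since $\sup_\lambda a_\lambda=1$, choose distinct indices $\lambda_1,\lambda_2,\ldots\in\Lambda$ with $a_{\lambda_n}\to1$, and for each $j$ put
\[ V_{j,n}\df\{x\in D_{\alpha_j}\dd\ \|f_j(x)\|\leqsl a_{\lambda_n}\}\setminus\bigcup_{m<n}V_{j,m}\qquad(n\geqsl1). \]
These sets are measurable, pairwise disjoint, satisfy $\|f_j(x)\|\leqsl a_{\lambda_n}$ for all $x\in V_{j,n}$, and $\bigcup_n V_{j,n}\supseteq\{x\dd\ \|f_j(x)\|<1\}$ is of full measure $m_{\alpha_j}$. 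Letting $w_{j,n}$ be the constant unit-matrix function on $V_{j,n}$ and $0$ off it, the elements $w_n\df(w_{j,n})_{j\in J}$ are mutually orthogonal central projections in $\Ll$ with $\sum_{n\geqsl1}w_n=1$ and $\|\Phi(T)w_n\|\leqsl a_{\lambda_n}$. Finally I set $Z_{\lambda_n}\df\Phi^{-1}(w_n)$ and $Z_\lambda\df0$ for the remaining $\lambda\in\Lambda$; this is a partition in the sense of \DEF{partition}, and $\|TZ_\lambda\|\leqsl a_\lambda$ holds for every $\lambda$, giving (b).

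There is no genuine difficulty: the statement is a cosmetic strengthening of \THM{fin1}, obtained by re-labelling the exhausting sequence of norm-levels. The one point demanding mild care is the covering claim $\bigcup_n V_{j,n}=D_{\alpha_j}$ modulo a null set — it uses exactly that $a_{\lambda_n}\to1$ together with the a.e.\ bound $\|f_j\|<1$ supplied by \PRO{key}, and would fail if the chosen $a_{\lambda_n}$ stayed bounded away from $1$; one must also not forget the indices of $\Lambda$ outside the chosen subsequence, which are harmlessly assigned the zero projection.
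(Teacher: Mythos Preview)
Your proof is correct and is precisely the ``slight modification of the argument used in the proof of \THM{fin1}'' that the paper alludes to (the paper skips the proof entirely). Your reading of the hypothesis as $a_\lambda<1$ for every $\lambda$ is the right one---it matches the only application of the corollary, in the proof of \THM{aff}, where $a_\lambda=c_\lambda/(1+c_\lambda)$---and your handling of the unused indices by assigning them the zero projection is the natural way to complete the partition.
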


\section{Algebra of affiliated operators}

The aim of this part is to show the following result.

\begin{thm}{aff}
Let $\Aa$ be finite and type~I, and $\{c_{\lambda}\}_{\lambda\in\Lambda}$ be a countable and
\textbf{unbounded} set of positive real numbers. For any operator $T$ in $\HHh$ the following
conditions are equivalent:
\begin{enumerate}[\upshape(a)]
\item $T \in \Aff(\Aa)$;
\item there is $S \in \Aa$ and a partition $\{Z_{\lambda}\}_{\lambda\in\Lambda}$ for which $T =
 \sum_{\lambda\in\Lambda} c_{\lambda} S Z_{\lambda}$.
\end{enumerate}
\end{thm}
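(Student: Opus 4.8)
The implication (b)$\Rightarrow$(a) should be the easy direction. Given $S \in \Aa$ and a partition $\{Z_\lambda\}$, each $c_\lambda S Z_\lambda$ is a bounded operator commuting with every unitary in $\Aa'$ (since $S$ and each central projection $Z_\lambda$ do), and the orthogonal-sum construction described in the Notation section produces $T = \bigoplus_\lambda c_\lambda S Z_\lambda$ as a closed, densely defined operator. One checks that $T$ is affiliated with $\Aa$: for a unitary $U \in \Aa'$, $U$ commutes with each $Z_\lambda$, so $U$ permutes the summands trivially and hence $UTU^{-1} = T$. Density of the domain follows because $\bigvee_\lambda Z_\lambda \HHh = \HHh$ and each $Z_\lambda\HHh$ lies in the domain. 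The only mild subtlety is verifying that the naturally-defined orthogonal sum is already closed, which is standard.

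The substantive direction is (a)$\Rightarrow$(b). Starting from $T \in \Aff(\Aa)$, I would pass to the polar-type data via the spectral measure of $|T|$: since $T$ is affiliated with $\Aa$, the spectral projections $E_t \df E_{|T|}([0,t])$ belong to $\Aa$, and $E_t \uparrow I$ strongly as $t \to \infty$. The idea is to cut $\HHh$ into central pieces on which $|T|$ is bounded with a controlled norm, then absorb the unbounded scaling into the coefficients $c_\lambda$. Concretely, enumerate $\Lambda = \{\lambda_1, \lambda_2, \dots\}$ and, using unboundedness of $\{c_\lambda\}$, choose an increasing sequence of thresholds so that the "$n$-th shell" of $|T|$ (where $\|T\|$ restricted to that shell is at most roughly $c_{\lambda_n}$) is carved out. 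Here is where \COR{central} enters: it lets me replace an ordinary spectral cut-off — which produces projections in $\Aa$ but not necessarily central ones — by a genuine \emph{partition} (central projections) with prescribed norm bounds. I would apply \COR{central} to a suitably rescaled operator built from $|T|$ (e.g.\ to $(1+|T|)^{-1}$ or to $|T| \cdot (\text{cut-off})$ divided by a large constant) to obtain central projections $\{Z_\lambda\}$ with $\|TZ_\lambda\| \leqsl c_\lambda$ for each $\lambda$. Then set $S_\lambda = T Z_\lambda / c_\lambda \in \Aa$ (with $\|S_\lambda\| \leqsl 1$), and — because the $Z_\lambda$ are mutually orthogonal central projections — assemble a single $S = \bigoplus_\lambda S_\lambda \in \Aa$ (the sum is bounded since each block has norm $\leqsl 1$ and they act on orthogonal subspaces). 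By construction $S Z_\lambda = S_\lambda$, so $\sum_\lambda c_\lambda S Z_\lambda = \sum_\lambda T Z_\lambda = T$, the last equality because $\{Z_\lambda\}$ is a partition and $T$ is affiliated with $\Aa$ (hence reduced by every central projection).

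The main obstacle I anticipate is the bookkeeping that makes \COR{central} directly applicable: \COR{central} is a statement about a \emph{bounded} contraction $T$ with $\|T\xi\| < \|\xi\|$ for all nonzero $\xi$, whereas our $T$ is unbounded. So the real work is to manufacture, from the unbounded $T$, an auxiliary bounded operator whose strict-contractivity encodes exactly the spectral shells I want, and to verify that the partition it yields does control $\|TZ_\lambda\|$ with the \emph{given} sequence $\{c_\lambda\}$ rather than some sequence I get to choose. This requires using the prescribed countable unbounded set $\{c_\lambda\}$ to fix the threshold sequence in advance and then feeding a matching normalized sequence $\{a_\lambda\}$ (with $\sup a_\lambda = 1$) into \COR{central}. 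A secondary point to be careful about is that $\sum_\lambda c_\lambda S Z_\lambda$ must be interpreted as the maximal-domain orthogonal sum and shown to equal $T$ on the nose (not merely to extend or be extended by $T$); this follows from $T$ being reduced by each $Z_\lambda$ together with $\sum Z_\lambda = I$, but it deserves an explicit line. Everything else is routine manipulation of spectral projections and affiliation.
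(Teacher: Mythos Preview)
Your plan is essentially the paper's own argument: reduce the unbounded $T$ to a bounded strict contraction in $\Aa$, apply \COR{central} with $a_\lambda$ chosen so that the resulting norm bounds translate back to $\|T Z_\lambda\| \leqsl c_\lambda$, set $S_\lambda = \frac{1}{c_\lambda} T Z_\lambda$ and $S = \sum_\lambda S_\lambda$, and verify the reconstructed sum equals $T$. The paper carries this out via the $\bB$-transform $\bB(T) = T(I+|T|)^{-1}$, which by ($\bB$1) and ($\bB$3) lies in $\Aa$ and satisfies $\|\bB(T)\xi\| < \|\xi\|$ for every nonzero $\xi$; the thresholds are $a_\lambda = \bB(c_\lambda) = c_\lambda/(1+c_\lambda)$, and the final equality $T = \sum_\lambda c_\lambda S Z_\lambda$ is checked by comparing $\bB$-transforms using \LEM{block} and ($\bB$4)--($\bB$5).

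One concrete correction: neither of your proposed auxiliary operators works as stated. The operator $(I+|T|)^{-1}$ fails strict contractivity whenever $\NNn(T)\neq\{0\}$ (if $|T|\xi=0$ then $(I+|T|)^{-1}\xi=\xi$), and in any case its norm on $Z_\lambda$ being small forces $|T|$ to be \emph{large} there, which is the wrong direction. A truncation $|T|\cdot(\text{cut-off})/M$ likewise need not be strictly contractive. The fix is simply to use $|T|(I+|T|)^{-1}$ (equivalently $|\bB(T)|$, or $I-(I+|T|)^{-1}$): this is a positive element of $\Aa$ with $I-\bigl(|T|(I+|T|)^{-1}\bigr)^2 = (I+2|T|)(I+|T|)^{-2}$ injective, hence strictly contractive, and the monotone bijection $t\mapsto t/(1+t)$ makes the translation between $a_\lambda$ and $c_\lambda$ exact. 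With that adjustment your outline matches the paper's proof.
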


To make the above result more precise (and understood), let us introduce the following

\begin{dfn}{series}
Let $\{Z_{\lambda}\}_{\lambda\in\Lambda}$ be a partition and $\{S_{\lambda}\}_{\lambda\in\Lambda}$
be any collection of operators in $\Aa$. An operator $\sum_{\lambda\in\Lambda} S_{\lambda}
Z_{\lambda}$ is defined as follows:
\begin{equation*}
\DDd\Bigl(\sum_{\lambda\in\Lambda} S_{\lambda} Z_{\lambda}\Bigr) \df \Bigl\{\xi \in \HHh\dd\
\sum_{\lambda\in\Lambda} \|S_{\lambda} Z_{\lambda} \xi\|^2 < \infty\Bigr\}
\end{equation*}
and $(\sum_{\lambda\in\Lambda} S_{\lambda} Z_{\lambda}) \xi = \sum_{\lambda\in\Lambda} (S_{\lambda}
Z_{\lambda} \xi)$ (notice that $S_{\lambda} Z_{\lambda} \xi = Z_{\lambda} S_{\lambda} \xi$ and thus
the vectors $S_{\lambda} Z_{\lambda} \xi$, $\lambda \in \Lambda$, are mutually orthogonal).
\end{dfn}

The following simple result will find many applications in the sequel.

\begin{lem}{block}
Let $(Z_{\lambda})_{\lambda\in\Lambda}$ be a partition. Denote by $\HHh_{\lambda}$ the range
of $Z_{\lambda}$. Then there exists a unitary operator $U\dd \HHh \to \bigoplus_{\lambda\in\Lambda}
\HHh_{\lambda}$ such that for any collection $\{S_{\lambda}\}_{\lambda\in\Lambda} \subset \Aa$,
\begin{equation*}
U \Bigl(\sum_{\lambda\in\Lambda} S_{\lambda} Z_{\lambda}\Bigr) U^{-1} =
\bigoplus_{\lambda\in\Lambda} S_{\lambda}\bigr|_{\HHh_{\lambda}}.
\end{equation*}
\end{lem}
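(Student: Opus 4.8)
The plan is to build the unitary $U$ directly from the geometry of the partition and then verify the intertwining relation by a routine computation on a dense domain. First I would note that since $(Z_\lambda)_{\lambda\in\Lambda}$ is a partition, the ranges $\HHh_\lambda = Z_\lambda(\HHh)$ are mutually orthogonal closed subspaces whose closed linear span is all of $\HHh$; equivalently, every $\xi\in\HHh$ has a unique orthogonal decomposition $\xi = \sum_{\lambda\in\Lambda} Z_\lambda\xi$ with $\sum_\lambda \|Z_\lambda\xi\|^2 = \|\xi\|^2 < \infty$. Define $U\dd\HHh\to\bigoplus_{\lambda\in\Lambda}\HHh_\lambda$ by $U\xi \df (Z_\lambda\xi)_{\lambda\in\Lambda}$. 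The displayed norm identity shows $U$ is a well-defined isometry into the Hilbert-space direct sum, and it is onto because any $(\eta_\lambda)_\lambda$ in the direct sum with $\eta_\lambda\in\HHh_\lambda$ and $\sum\|\eta_\lambda\|^2<\infty$ is the image of the (norm-convergent) sum $\xi\df\sum_\lambda\eta_\lambda$, for which $Z_\lambda\xi=\eta_\lambda$. Hence $U$ is unitary, with $U^{-1}$ given by $(\eta_\lambda)_\lambda \mapsto \sum_\lambda\eta_\lambda$.

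Next I would check the intertwining formula. Fix a collection $\{S_\lambda\}_{\lambda\in\Lambda}\subset\Aa$ and write $T\df\sum_{\lambda\in\Lambda}S_\lambda Z_\lambda$ as in \DEF{series}, and $R\df\bigoplus_{\lambda\in\Lambda}S_\lambda\bigr|_{\HHh_\lambda}$, where $S_\lambda\bigr|_{\HHh_\lambda}$ makes sense as an operator $\HHh_\lambda\to\HHh_\lambda$ because $Z_\lambda\in\Zz(\Aa)$ commutes with $S_\lambda$, so $S_\lambda$ maps $\HHh_\lambda\cap\DDd(S_\lambda)$ into $\HHh_\lambda$ (and similarly $S_\lambda^*$ maps $\HHh_\lambda\cap\DDd(S_\lambda^*)$ into $\HHh_\lambda$, so $S_\lambda\bigr|_{\HHh_\lambda}$ is closed and densely defined in $\HHh_\lambda$). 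Then I compare domains: $\xi\in\DDd(T)$ iff $\sum_\lambda\|S_\lambda Z_\lambda\xi\|^2<\infty$, while $U\xi=(Z_\lambda\xi)_\lambda\in\DDd(R)$ iff each $Z_\lambda\xi\in\DDd(S_\lambda\bigr|_{\HHh_\lambda})$ and $\sum_\lambda\|S_\lambda Z_\lambda\xi\|^2<\infty$; using $S_\lambda Z_\lambda\xi = Z_\lambda S_\lambda Z_\lambda\xi$ these two conditions coincide, so $U$ maps $\DDd(T)$ onto $\DDd(R)$. On that common domain, $U(T\xi) = U\bigl(\sum_\lambda S_\lambda Z_\lambda\xi\bigr) = (Z_\mu\sum_\lambda S_\lambda Z_\lambda\xi)_\mu = (S_\mu Z_\mu\xi)_\mu = R(Z_\lambda\xi)_\lambda = R(U\xi)$, where the third equality again uses the centrality of the $Z_\lambda$ (so $Z_\mu S_\lambda Z_\lambda = \delta_{\lambda\mu}S_\mu Z_\mu$). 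This gives $UTU^{-1}=R$.

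The one point that needs a little care, rather than being a genuine obstacle, is checking that $T$ as defined in \DEF{series} is already closed (so that the equality $UTU^{-1}=R$ is an equality of closed operators, not merely a statement up to closure). This follows once we know $R$ is closed: an orthogonal direct sum $\bigoplus_\lambda T_\lambda$ of closed operators is closed, since if $(\eta_\lambda^{(n)})_\lambda\to(\eta_\lambda)_\lambda$ and $(T_\lambda\eta_\lambda^{(n)})_\lambda\to(\zeta_\lambda)_\lambda$ in the direct sum, then coordinatewise $\eta_\lambda^{(n)}\to\eta_\lambda$ and $T_\lambda\eta_\lambda^{(n)}\to\zeta_\lambda$, so $\eta_\lambda\in\DDd(T_\lambda)$ and $T_\lambda\eta_\lambda=\zeta_\lambda$ for each $\lambda$, and $\sum_\lambda\|\zeta_\lambda\|^2<\infty$ as the limit of a convergent sequence in the direct sum; hence $(\eta_\lambda)_\lambda\in\DDd(\bigoplus_\lambda T_\lambda)$ with the right image. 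Transporting back through the unitary $U$ shows $T$ is closed as well. Everything else is bookkeeping, so I would keep the write-up short, essentially: define $U$, observe it is unitary, and verify the two operators agree on domains and on vectors.
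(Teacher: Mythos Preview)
Your proof is correct and follows exactly the paper's approach: define $U\xi = \oplus_{\lambda\in\Lambda} Z_\lambda\xi$ and verify the intertwining. The paper's own proof is a single line giving this definition and leaving the routine check to the reader; your extra care with the domains $\DDd(S_\lambda)$ is harmless but unnecessary, since each $S_\lambda\in\Aa$ is bounded and hence everywhere defined.
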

\begin{proof}
For each $\xi \in \HHh$, it is enough to define $U \xi$ as $\oplus_{\lambda\in\Lambda} Z_{\lambda}
\xi$.
\end{proof}

Now we list only most basic consequences of \LEM{block}. Below $\{Z_{\lambda}\}_{\lambda\in\Lambda}$
is a partition in $\Aa$, $\{S_{\lambda}\}_{\lambda\in\Lambda}$ is an arbitrary collection
of operators in $\Aa$ and $\EEe$ stands for the linear span of $\bigcup_{\lambda\in\Lambda}
\RRr(Z_{\lambda})$. Notice that $\EEe$ is dense in $\HHh$.
\begin{enumerate}[($\Sigma$1)]
\item $\sum_{\lambda\in\Lambda} S_{\lambda} Z_{\lambda}$ is closed;
\item $\EEe \subset \DDd(\sum_{\lambda\in\Lambda} S_{\lambda} Z_{\lambda})$ and $\EEe$ is a core
 of $\sum_{\lambda\in\Lambda} S_{\lambda} Z_{\lambda}$;
\item $(\sum_{\lambda\in\Lambda} S_{\lambda} Z_{\lambda})^* = \sum_{\lambda\in\Lambda} S_{\lambda}^*
 Z_{\lambda}$;
\item $\sum_{\lambda\in\Lambda} S_{\lambda} Z_{\lambda} \in \Aff(\Aa)$;
\item if $S_{\lambda} = c_{\lambda} S$ with $c_{\lambda}> 0$ for each $\lambda \in \Lambda$, then
 $\sum_{\lambda\in\Lambda} S_{\lambda} Z_{\lambda}$ is self-adjoint (resp. non-negative; normal) iff
 $S$ is so.
\end{enumerate}

In the proof of \THM{aff} we shall make use of a certain transformation which assigns to any closed
densely defined operator a contraction. In the existing literature there are at least two such
transformations. The first was studied e.g. by Kaufman \cite{kau} and it associates with every
closed densely defined operator $T$ the operator $T(I+T^*T)^{-\frac12}$. The second, quite similar
to the first, is the so-called $\bB$-transform introduced in \cite{pn0} and given by $\bB(T) =
T (I+|T|)^{-1}$. We will use the following properties of the latter transform.

\begin{lem}{b}
Let $T$ and $T_s$, $s \in S$, be closed densely defined operators in $\HHh$ and $\HHh_s$,
respectively. Then:
\begin{enumerate}[\upshape($\bB$1)]
\item the $\bB$-transform establishes a one-to-one correspondence between the set of all closed
 densely defined operators in $\HHh$ and the set of all bounded operators $S$ on $\HHh$ such that
 $\|S \xi\| < \|\xi\|$ for each non-zero vector $\xi \in \HHh$; the inverse transform is given
 by $S \mapsto \uU\bB(S) \df S(I-|S|)^{-1}$.
\item $T$ is bounded iff $\|\bB(T)\| < 1$; conversely, if $S \in \BBb(\HHh)$ and $\|S\| < 1$, then
 $\uU\bB(S) \in \BBb(\HHh)$;
\item $T \in \Aff(\Aa) \iff \bB(T) \in \Aa$;
\item $\bB(U T U^{-1}) = U \bB(T) U^{-1}$ for any unitary operator $U\dd \HHh \to \KKk$;
\item $\bB(\bigoplus_{s \in S} T_s) = \bigoplus_{s \in S} \bB(T_s)$.
\end{enumerate}
\end{lem}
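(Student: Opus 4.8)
The plan is to base everything on the polar decomposition $T = V|T|$, where $V$ is the partial isometry with initial space $\overline{\RRr(|T|)} = \NNn(T)^{\perp}$, together with the bounded Borel functional calculus for the non-negative self-adjoint operator $|T|$. Put $g(t) = t/(1+t)$ (on $[0,\infty)$) and $h(t) = t/(1-t)$ (on $[0,1)$), so that $(I+|T|)^{-1}$ is the operator $\frac{1}{1+\cdot}(|T|)$ and hence $\bB(T) = V\,g(|T|)$. Since $g(|T|)\xi = |T|\bigl((I+|T|)^{-1}\xi\bigr)$, the range of $g(|T|)$ lies in $\RRr(|T|)$, which is contained in the initial space of $V$; consequently $\|\bB(T)\xi\| = \|g(|T|)\xi\|$ for every $\xi$, and $\bB(T)^*\bB(T) = g(|T|)V^*Vg(|T|) = g(|T|)^2$, so that $|\bB(T)| = g(|T|)$.

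To settle ($\bB$1), first note that $g(t) < 1$ for every $t \in [0,\infty)$, so for $\xi \neq 0$ the finite non-zero measure $\sigma \mapsto \|E_{|T|}(\sigma)\xi\|^2$ gives $\|\bB(T)\xi\|^2 = \int g(t)^2\, d\|E_{|T|}(t)\xi\|^2 < \|\xi\|^2$; hence $\bB(T) \in \BBb(\HHh)$ is a strict contraction in the required sense. From $|\bB(T)| = g(|T|)$ we get $I - |\bB(T)| = \frac{1}{1+\cdot}(|T|) = (I+|T|)^{-1}$, whence $\uU\bB(\bB(T)) = \bB(T)(I+|T|) = V\,g(|T|)(I+|T|) = V|T| = T$ on $\DDd(T) = \DDd(|T|)$, the middle equality being the functional-calculus identity $g(t)(1+t) = t$. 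Conversely, if $S \in \BBb(\HHh)$ satisfies $\|S\xi\| < \|\xi\|$ for $\xi \neq 0$, then $\|S\| \leqsl 1$, $I - S^*S \geqsl 0$, and $\NNn(I - S^*S) = \{0\}$ (an equality $\|S\xi\| = \|\xi\|$ forces $\xi = 0$); since $I+|S| \geqsl I$ is boundedly invertible, $I - |S|$ is injective self-adjoint, so $(I-|S|)^{-1}$ is a closed operator on the dense domain $\RRr(I-|S|)$ and $\uU\bB(S) = S(I-|S|)^{-1}$ is well defined. Writing $S = W|S|$ polarly, the same computation (now using that $1$ is not an eigenvalue of $|S|$, so $h(|S|)$ is a genuine self-adjoint operator, and $h(t)(1-t) = t$) yields $|\uU\bB(S)| = h(|S|) = |S|(I-|S|)^{-1}$, hence $I + |\uU\bB(S)| = (I-|S|)^{-1}$ and $\bB(\uU\bB(S)) = \uU\bB(S)(I-|S|) = S$. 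This proves the bijection and the formula for its inverse.

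Now ($\bB$2) is immediate: $\|\bB(T)\| = \|g(|T|)\| = \sup\{g(t)\dd\ t \in \sigma(|T|)\}$, and since $g$ is increasing with $g(t) \to 1$ as $t \to \infty$, this supremum is $< 1$ precisely when $\sigma(|T|)$ is bounded, i.e. when $T = V|T|$ is bounded; the last clause of ($\bB$2) follows by applying this to $\uU\bB(S)$ together with $\bB(\uU\bB(S)) = S$. For ($\bB$3), if $T \in \Aff(\Aa)$ then $|T| \in \Aff(\Aa)$, so every bounded Borel function of $|T|$ lies in $\Aa$; in particular, for each unitary $U \in \Aa'$ one has $U|T|U^{-1} = |T|$, hence $U(I+|T|)^{-1}U^{-1} = (I+|T|)^{-1}$ and $U\bB(T)U^{-1} = UTU^{-1}\cdot U(I+|T|)^{-1}U^{-1} = T(I+|T|)^{-1} = \bB(T)$; as $\bB(T)$ is bounded this gives $\bB(T) \in \Aa'' = \Aa$. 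The converse is symmetric via $T = \uU\bB(\bB(T)) = \bB(T)\bigl(I - |\bB(T)|\bigr)^{-1}$, all ingredients of which are obtained from $\bB(T)$ by operations ($|\cdot|$, bounded functional calculus, inversion of a closed injective operator) with which every unitary $U \in \Aa'$ commutes.

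Finally, ($\bB$4) and ($\bB$5) reduce to the naturality of these operations: $|UTU^{-1}| = U|T|U^{-1}$ gives $(I+|UTU^{-1}|)^{-1} = U(I+|T|)^{-1}U^{-1}$ and hence $\bB(UTU^{-1}) = UTU^{-1}\cdot U(I+|T|)^{-1}U^{-1} = U\bB(T)U^{-1}$; and $\bigl|\bigoplus_{s} T_s\bigr| = \bigoplus_{s} |T_s|$, $\bigl(I + \bigoplus_{s}|T_s|\bigr)^{-1} = \bigoplus_{s} (I+|T_s|)^{-1}$, so $\bB\bigl(\bigoplus_{s} T_s\bigr) = \bigoplus_{s} T_s(I+|T_s|)^{-1} = \bigoplus_{s} \bB(T_s)$ — the composition on the right being legitimate because each $\bB(T_s)$ is a contraction, so $\bigoplus_{s}(I+|T_s|)^{-1}$ maps $\bigoplus_{s}\HHh_s$ into $\DDd\bigl(\bigoplus_{s} T_s\bigr)$. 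The only point demanding care is the domain/range bookkeeping in ($\bB$1), concretely the two identities $|\bB(T)| = g(|T|)$ and $I + |\uU\bB(S)| = (I-|S|)^{-1}$; both hinge on the polar decomposition together with the observation that $g$ (resp. $h$) is a legitimate Borel function on $\sigma(|T|)$ (resp. on $\sigma(|S|)$, where one uses that $1$ is not an eigenvalue of $|S|$).
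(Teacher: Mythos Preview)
Your argument is correct and self-contained. Note, however, that the paper does not actually prove this lemma: it merely records properties ($\bB$1)--($\bB$5) as known facts about the $\bB$-transform, citing \cite{pn0} where the transform was introduced. So there is no ``paper's own proof'' to compare against; you have supplied what the authors chose to omit. Your approach---polar decomposition $T = V|T|$ combined with the bounded Borel functional calculus for $|T|$, via the pair of mutually inverse functions $g(t)=t/(1+t)$ and $h(t)=t/(1-t)$---is exactly the standard route, and the key identities $|\bB(T)| = g(|T|)$ and $I+|\uU\bB(S)| = (I-|S|)^{-1}$ are handled cleanly. One small remark: in the converse direction of ($\bB$1) you might state explicitly that $\uU\bB(S)=S(I-|S|)^{-1}$ is closed (as the composition of a bounded operator with a closed one) and densely defined (since $I-|S|$ is injective self-adjoint, hence has dense range); you essentially say this, but it is the one place where the ``closed densely defined'' part of the bijection needs a word.
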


Below we use the $\bB$- and $\uU\bB$-transforms also as complex-valued functions defined on $\CCC$,
given by appropriate analogous formulas.

\begin{proof}[Proof of \THM{aff}]
Property ($\Sigma$4) shows that (a) is implied by (b). Now assume that $T \in \Aff(\Aa)$. Then
$\bB(T) \in \Aa$ and $\|\bB(T)\xi\| < \|\xi\|$ for each $\xi \neq 0$ (see ($\bB$1) and ($\bB$3)).
Using \COR{central} with $a_{\lambda} \df \bB(c_{\lambda}) = \frac{c_{\lambda}}{1+c_{\lambda}}$,
we obtain a partition $\{Z_{\lambda}\}_{\lambda\in\Lambda}$ such that $\|\bB(T) Z_{\lambda}\| \leqsl
a_{\lambda} < 1$. We now infer from ($\bB$2) that there exist operators $S_{\lambda} \in
\BBb(\HHh)$, $\lambda \in \Lambda$, such that
\begin{equation}\label{eqn:b(T)}
\bB(S_{\lambda}) = \bB(T) Z_{\lambda}.
\end{equation}
We can express $S_{\lambda}$ directly as $S_{\lambda} = \bB(T) Z_{\lambda} (I - |\bB(T)
Z_{\lambda}|\bigr)^{-1}$ and this formula clearly implies that $S_{\lambda} \in \Aa$. It is
a well-known property of the functional calculus for self-adjoint (bounded) operators that
$\|\uU\bB(A)\| = \uU\bB(\|A\|) = \frac{\|A\|}{1-\|A\|}$ for any non-negative operator $A$ of norm
less than $1$. We shall apply this property for $A = |\bB(T) S_{\lambda}|$. We have:
\begin{equation}\label{eqn:Slamb}
S_{\lambda} Z_{\lambda} = \bB(T) Z_{\lambda}^2 (I - |\bB(T) Z_{\lambda}|)^{-1} = \bB(T) Z_{\lambda}
(I - |\bB(T) Z_{\lambda}|)^{-1} = S_{\lambda}
\end{equation}
and
\begin{align}\label{eqn:clamb}
\|S_{\lambda}\| &= \|\bB(T) Z_{\lambda} (I - |\bB(T) Z_{\lambda}|)^{-1}\|\\
&= \Bigl\| |\bB(T) Z_{\lambda}| (I - |\bB(T) Z_{\lambda}|)^{-1}\Bigr\|\nonumber\\
&= \|\uU\bB(|\bB(T) Z_{\lambda}|)\| = \uU\bB\bigl(\bigl\||\bB(T) Z_{\lambda}|\bigr\|\bigr) \leqsl
\uU\bB(a_{\lambda}) = c_{\lambda}.\nonumber
\end{align}
Define $S \df \sum_{\lambda\in\Lambda} \frac{1}{c_{\lambda}} S_{\lambda}$. From \eqref{eqn:Slamb}
and \eqref{eqn:clamb} we infer that the series converges in the strong operator topology.
Consequently, $S \in \Aa$. Moreover, $c_{\lambda} S Z_{\lambda} = S_{\lambda} Z_{\lambda}$. In order
to prove that $T = \sum_{\lambda\in\Lambda} c_{\lambda} S Z_{\lambda}$, it is enough to show that
$\bB(T) = \bB(\sum_{\lambda\in\Lambda} c_{\lambda} S Z_{\lambda})$. Using \LEM{block}, a unitary
operator $U$ and subspaces $\HHh_{\lambda}$ appearing there, properties ($\bB$4) and ($\bB$5)
formulated in \LEM{b}, and \eqref{eqn:b(T)}, we get
\begin{multline*}
\bB\Bigl(\sum_{\lambda\in\Lambda} c_{\lambda} S Z_{\lambda}\Bigr)
= U^{-1} \Bigl(\bigoplus_{\lambda\in\Lambda}
\bB\bigl(c_{\lambda} S\bigr|_{\HHh_{\lambda}}\bigr)\Bigr) U
= U^{-1} \Bigl(\bigoplus_{\lambda\in\Lambda}
\bB\bigl(S_{\lambda}\bigr|_{\HHh_{\lambda}}\bigr)\Bigr) U\\
= U^{-1} \Bigl(\bigoplus_{\lambda\in\Lambda} \bB(S_{\lambda})\bigr|_{\HHh_{\lambda}}\Bigr) U
= \bigoplus_{\lambda\in\Lambda} \bB(T)\bigr|_{\HHh_{\lambda}} = \bB(T)
\end{multline*}
and we are done.
\end{proof}

As a first application of \THM{aff} we obtain

\begin{cor}{common}
Let $\Aa$ be finite and type~I, and $\Lambda \df \{\nu = (\nu_1,\ldots,\nu_k)\dd\ \nu_1,\ldots,
\nu_k \geqsl 1\}$. For any collection $T_1,\ldots,T_k \in \Aff(\Aa)$ there exist a partition
$\{Z_{\nu}\}_{\nu\in\Lambda}$ in $\Aa$ and operators $S_1,\ldots,S_k \in \Aa$ such that for each
$j \in \{1,\ldots,k\}$,
\begin{equation}\label{eqn:common}
T_j = \sum_{\nu\in\Lambda} \nu_j S_j Z_{\nu}.
\end{equation}
\end{cor}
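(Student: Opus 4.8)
The plan is to obtain the common partition $\{Z_{\nu}\}_{\nu\in\Lambda}$ as a simultaneous refinement of $k$ separate partitions delivered by \THM{aff}. First I would apply \THM{aff} to each $T_j$ ($j=1,\ldots,k$) with the countable unbounded set $\{1,2,3,\ldots\}$ and $c_n \df n$; this produces operators $S_1,\ldots,S_k \in \Aa$ and partitions $\{Z_{j,n}\}_{n\geqsl1}$ in $\Aa$ with $T_j = \sum_{n\geqsl1} n S_j Z_{j,n}$.

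Next I would define, for $\nu = (\nu_1,\ldots,\nu_k) \in \Lambda$, the product $Z_{\nu} \df Z_{1,\nu_1} Z_{2,\nu_2} \dotsm Z_{k,\nu_k}$. Since all the projections $Z_{j,n}$ lie in $\Zz(\Aa)$ they commute with one another, so each $Z_{\nu}$ is again a central projection; if $\nu \neq \nu'$ then $\nu_j \neq \nu'_j$ for some $j$, whence $Z_{j,\nu_j} Z_{j,\nu'_j} = 0$ and so $Z_{\nu} Z_{\nu'} = 0$; and iterating the identities $\sum_{n} Z_{j,n} = I$ (in the strong operator topology) one coordinate at a time gives $\sum_{\nu\in\Lambda} Z_{\nu} = I$. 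Hence $\{Z_{\nu}\}_{\nu\in\Lambda}$ is a partition in $\Aa$. The same coordinate-wise summation, carried out over the $k-1$ coordinates other than $j$, yields the key bookkeeping identity $Z_{j,n} = \sum_{\nu\in\Lambda,\ \nu_j = n} Z_{\nu}$ for every $j$ and $n$.

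With these $Z_{\nu}$ and the same $S_1,\ldots,S_k$ I would then verify \eqref{eqn:common}. Substituting $Z_{j,n} = \sum_{\nu\in\Lambda,\ \nu_j = n} Z_{\nu}$ into $T_j = \sum_{n\geqsl1} n S_j Z_{j,n}$ and using the centrality of the $Z_{\nu}$ (so that $S_j Z_{\nu} = Z_{\nu} S_j$ and the vectors $S_j Z_{\nu}\xi = Z_{\nu} S_j\xi$, $\nu \in \Lambda$, are mutually orthogonal), one checks that $\sum_{n\geqsl1} n S_j Z_{j,n}$ and $\sum_{\nu\in\Lambda} \nu_j S_j Z_{\nu}$ --- both read with the maximal domain of \DEF{series} --- have the same domain, since $\|Z_{j,n} S_j\xi\|^2 = \sum_{\nu\in\Lambda,\ \nu_j = n}\|Z_{\nu} S_j\xi\|^2$ gives $\sum_{n} n^2\|S_j Z_{j,n}\xi\|^2 = \sum_{\nu\in\Lambda} \nu_j^2\|S_j Z_{\nu}\xi\|^2$; and that on this common domain the two operators agree, because the unconditionally convergent sum $\sum_{\nu\in\Lambda} \nu_j S_j Z_{\nu}\xi$ regrouped according to the value of $\nu_j$ is exactly $\sum_{n} n S_j Z_{j,n}\xi$. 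This establishes $T_j = \sum_{\nu\in\Lambda} \nu_j S_j Z_{\nu}$ for each $j$, as required.

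I expect the only genuine point of care to be this last reconciliation of the two maximal-domain sums under the refinement of the partition; the rest is elementary manipulation of a commuting family of central projections, and the orthogonality remark already built into \DEF{series} does essentially all the work.
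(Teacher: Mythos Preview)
Your proposal is correct and follows exactly the same route as the paper: apply \THM{aff} to each $T_j$ separately with $c_n=n$, then take the common refinement $Z_{\nu} \df Z_{1,\nu_1}\cdots Z_{k,\nu_k}$. The paper's proof is a one-line sketch of precisely this construction, and the domain/orthogonality verification you spell out is the routine detail it leaves implicit.
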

\begin{proof}
Using \THM{aff}, write each $T_j$ as $\sum_{n=1}^{\infty} n S_j Z^{(j)}_n$ and put $Z_{\nu} =
Z^{(1)}_{\nu_1} \cdot \ldots \cdot Z^{(k)}_{\nu_k}$.
\end{proof}

\begin{rem}{alg}
\COR{common} gives an alternative proof of the Murray-von Neumann theorem \cite{mvn} that
$\Aff(\Aa)$ can naturally be equipped with a structure of a $*$-algebra provided $\Aa$ is finite and
type~I (the assumption that $\Aa$ is type~I is superfluous; however, our proof works only
in that case). Indeed, if $T_1,\ldots,T_k$ are arbitrary members of $\Aff(\Aa)$, and
$\{Z_{\nu}\}_{\nu\in\Lambda}$ and $S_1,\ldots,S_k \in \Aa$ are as in \eqref{eqn:common}, then for
any polynomial $p(x_1,\ldots,x_n)$ in $n$ non-commuting variables we may define $p(T_1,\ldots,T_n)$
as $\sum_{\nu\in\Lambda} p(\nu_1 S_1,\ldots,\nu_k S_k) Z_{\nu}$. With such a definition, the linear
span of $\bigcup_{\nu\in\Lambda} \RRr(Z_{\nu})$ is a core for each operator of the form
$p(T_1,\ldots,T_k)$. Furthermore, the representation \eqref{eqn:common} enables us to prove briefly
that $T_1 = T_2$ provided $T_1 \subset T_2$. It is now easy to conclude from all these observations
that $\Aff(\Aa)$ admits a structure of a $*$-algebra (in particular, all algebraic laws for
an algebra, such as associativity, are satisfied). We leave the details to interested readers.
\end{rem}

\section{Trace}

We now turn to the concept of a center-valued trace on $\Aff(\Aa)$. \textbf{In this section
$\Aa$ is assumed to be finite and type~I.} We recall that `$\plusaff$', `$\minusaff$' and
`$\cdotaff$' denote the binary operations in $\Aff(\Aa)$. Our main goal is to prove all items
of \THM{main}, apart from (tr6), which will be shown in the next section.\par
We begin with

\begin{pro}{center}
$\Aff(\Zz(\Aa)) = \Zz(\Aff(\Aa))$.
\end{pro}
\begin{proof}
Take $T \in \Aff(\Zz(\Aa)) \subset \Aff(\Aa)$. Since $\Zz(\Aa)$ is also finite and type~I,
it follows from \THM{aff} that $T$ has the form $T = \sum_{n=1}^{\infty} n S Z_n$ with $S, Z_n \in
\Zz(\Aa)$. Similarly, any $X \in \Aff(\Aa)$ has the form $X = \sum_{n=1}^{\infty} n Y W_n$ with
$Y \in \Aa$ and $W_n \in \Zz(\Aa)$. Then $SY = YS$ and it follows from \REM{alg} that for $\Lambda =
\{\nu = (\nu_1,\nu_2)\dd\ \nu_1,\nu_2 \geqsl 1\}$ and $Z_{\nu} = Z_{\nu_1} W_{\nu_2}$, $T \cdotaff X
= \sum_{\nu\in\Lambda} \nu_1 \nu_2 S Y Z_{\nu} = \sum_{\nu\in\Lambda} \nu_2 \nu_1 Y S Z_{\nu} = X
\cdotaff T$, which shows that $T \in \Zz(\Aff(\Aa))$. In particular, $\Zz(\Aa) \subset
\Zz(\Aff(\Aa))$.\par
Conversely, take $T \in \Zz(\Aff(\Aa))$ of the form $T = \sum_{n=1}^{\infty} n S Z_n$ (with $S \in
\Aa$ and $Z_n \in \Zz(\Aa)$). Then $S Z_n = \frac1n T \cdotaff Z_n$ belongs to $\Zz(\Aff(\Aa)) \cap
\Aa \subset \Zz(\Aa)$ and thus $S = \sum_{n=1}^{\infty} S Z_n \in \Zz(\Aa)$. Another application
of \THM{aff} (for the von Neumann algebra $\Zz(\Aa)$) yields $T \in \Aff(\Zz(\Aa))$.
\end{proof}

\begin{lem}{valid}
Let $\{Z_{\lambda}\}_{\lambda\in\Lambda}$ and $\{W_{\gamma}\}_{\gamma\in\Gamma}$ be two partitions
in $\Aa$ and let $\{T_{\lambda}\}_{\lambda\in\Lambda}$ and $\{S_{\gamma}\}_{\gamma\in\Gamma}$ be
two collections of operators in $\Aa$ such that
\begin{equation}\label{eqn:2repr}
\sum_{\lambda\in\Lambda} T_{\lambda} Z_{\lambda} = \sum_{\gamma\in\Gamma} S_{\gamma} W_{\gamma}.
\end{equation}
Then
\begin{equation*}
\sum_{\lambda\in\Lambda} \tr_{\Aa}(T_{\lambda}) Z_{\lambda} = \sum_{\gamma\in\Gamma}
\tr_{\Aa}(S_{\gamma}) W_{\gamma}.
\end{equation*}
\end{lem}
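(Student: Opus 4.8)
The plan is to reduce the claim to a statement about the central projections $Z_\lambda W_\gamma$, on each of which the two representations in \eqref{eqn:2repr} can be compared directly. First I would refine both partitions to a common one: set $V_{\lambda,\gamma} \df Z_\lambda W_\gamma$ for $(\lambda,\gamma) \in \Lambda \times \Gamma$. Since the $Z_\lambda$ and the $W_\gamma$ are central projections summing to $I$, the family $\{V_{\lambda,\gamma}\}$ is again a partition in $\Aa$ (mutual orthogonality follows from the orthogonality within each family, and $\sum_{\lambda,\gamma} V_{\lambda,\gamma} = (\sum_\lambda Z_\lambda)(\sum_\gamma W_\gamma) = I$). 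Multiplying \eqref{eqn:2repr} on the right by $V_{\lambda,\gamma}$ and using that all projections involved are central and mutually commuting, the left side collapses to $T_\lambda V_{\lambda,\gamma}$ and the right side to $S_\gamma V_{\lambda,\gamma}$ (here one should check, via \LEM{block} or directly on the dense domain $\EEe$, that $(\sum_\mu T_\mu Z_\mu) V_{\lambda,\gamma} = T_\lambda V_{\lambda,\gamma}$, which is immediate because $Z_\mu V_{\lambda,\gamma} = 0$ for $\mu \neq \lambda$ and $Z_\lambda V_{\lambda,\gamma} = V_{\lambda,\gamma}$). Hence $T_\lambda V_{\lambda,\gamma} = S_\gamma V_{\lambda,\gamma}$ for every pair $(\lambda,\gamma)$.

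Next I would invoke the basic property of the center-valued trace $\tr_\Aa$ on the finite von Neumann algebra $\Aa$, namely that it is a $\Zz(\Aa)$-bimodule map: $\tr_\Aa(XZ) = \tr_\Aa(X) Z$ for $X \in \Aa$, $Z \in \Zz(\Aa)$ (this is standard — see e.g. Takesaki, and it is implicit in the notation set up in the introduction). Applying this to the identity $T_\lambda V_{\lambda,\gamma} = S_\gamma V_{\lambda,\gamma}$ gives
\begin{equation*}
\tr_\Aa(T_\lambda) V_{\lambda,\gamma} = \tr_\Aa(S_\gamma) V_{\lambda,\gamma}
\end{equation*}
for each $(\lambda,\gamma)$. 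Summing over $\gamma \in \Gamma$ (with $\sum_\gamma V_{\lambda,\gamma} = Z_\lambda$) yields $\tr_\Aa(T_\lambda) Z_\lambda = \sum_{\gamma} \tr_\Aa(S_\gamma) V_{\lambda,\gamma}$, and then summing over $\lambda \in \Lambda$ gives
\begin{equation*}
\sum_{\lambda\in\Lambda} \tr_\Aa(T_\lambda) Z_\lambda = \sum_{\lambda,\gamma} \tr_\Aa(S_\gamma) V_{\lambda,\gamma} = \sum_{\gamma\in\Gamma} \tr_\Aa(S_\gamma) W_\gamma,
\end{equation*}
which is exactly the desired equality. The summations over the refined partition make sense in the sense of \DEF{series}: all the operators $\tr_\Aa(T_\lambda)$, $\tr_\Aa(S_\gamma)$ lie in $\Zz(\Aa) \subset \Aa$, so the relevant series converge on the dense domain $\EEe$ and define the same closed operator regardless of the order of summation, since on each $\RRr(V_{\lambda,\gamma})$ only one term survives.

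The one point requiring a little care — and the only place where anything nontrivial happens — is justifying the manipulation "multiply \eqref{eqn:2repr} by a central projection and read off the blockwise identity." The cleanest way is to appeal to \LEM{block}: the unitary $U\dd \HHh \to \bigoplus_{(\lambda,\gamma)} \RRr(V_{\lambda,\gamma})$ conjugates $\sum_\lambda T_\lambda Z_\lambda$ into $\bigoplus_{(\lambda,\gamma)} T_\lambda|_{\RRr(V_{\lambda,\gamma})}$ and $\sum_\gamma S_\gamma W_\gamma$ into $\bigoplus_{(\lambda,\gamma)} S_\gamma|_{\RRr(V_{\lambda,\gamma})}$ (after re-indexing both original partitions by the common refinement, which is harmless since $T_\lambda Z_\lambda = \sum_\gamma T_\lambda V_{\lambda,\gamma}$ and similarly for the $S_\gamma$). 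Two direct sums of operators over the same index set agree iff they agree summand by summand, so \eqref{eqn:2repr} forces $T_\lambda|_{\RRr(V_{\lambda,\gamma})} = S_\gamma|_{\RRr(V_{\lambda,\gamma})}$, i.e. $T_\lambda V_{\lambda,\gamma} = S_\gamma V_{\lambda,\gamma}$, and the rest is the bookkeeping above. I expect no genuine obstacle beyond this; the proof is essentially a routine reduction to the finite-algebra case combined with the module property of $\tr_\Aa$.
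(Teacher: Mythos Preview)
Your proof is correct and follows essentially the same approach as the paper: both refine to the common partition $V_{\lambda,\gamma} = Z_\lambda W_\gamma$, deduce the blockwise identity $T_\lambda V_{\lambda,\gamma} = S_\gamma V_{\lambda,\gamma}$, apply the $\Zz(\Aa)$-module property of $\tr_\Aa$, and sum. The paper's version is terser, asserting the blockwise identity directly, while you give the extra justification via \LEM{block}; this is harmless but not needed.
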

\begin{proof}
For $P_{\lambda,\gamma} \df Z_{\lambda} W_{\gamma}$, we have, by \eqref{eqn:2repr}, $T_{\lambda}
P_{\lambda,\gamma} = S_{\gamma} P_{\lambda,\gamma}$ for any $\lambda \in \Lambda$ and $\gamma \in
\Gamma$. Consequently, $\tr_{\Aa}(T_{\lambda}) P_{\lambda,\gamma} = \tr_{\Aa}(T_{\lambda}
P_{\lambda,\gamma}) = \tr_{\Aa}(S_{\gamma} P_{\lambda,\gamma}) = \tr_{\Aa}(S_{\gamma})
P_{\lambda,\gamma}$. So,
\begin{equation*}
\sum_{\lambda\in\Lambda} \tr_{\Aa}(T_{\lambda}) Z_{\lambda} = \sum_{\lambda\in\Lambda}
\sum_{\gamma\in\Gamma} \tr_{\Aa}(T_{\lambda}) P_{\lambda,\gamma} =
\sum_{\gamma\in\Gamma} \sum_{\lambda\in\Lambda} \tr_{\Aa}(S_{\gamma}) P_{\lambda,\gamma} =
\sum_{\gamma\in\Gamma} \tr_{\Aa}(S_{\gamma}) W_{\gamma}
\end{equation*}
and we are done.
\end{proof}

Now we are ready to introduce

\begin{dfn}{trace}
The \textit{center-valued trace} in $\Aff(\Aa)$ is a mapping
\begin{equation*}
\traff\dd \Aff(\Aa) \to \Zz(\Aff(\Aa))
\end{equation*}
defined as follows. For any partition $\{Z_{\lambda}\}_{\lambda\in\Lambda}$ in $\Aa$ and
a collection $\{S_{\lambda}\}_{\lambda\in\Lambda} \subset \Aa$,
\begin{equation*}
\traff\Bigl(\sum_{\lambda\in\Lambda} S_{\lambda} Z_{\lambda}\Bigr) = \sum_{\lambda\in\Lambda}
\tr_{\Aa}(S_{\lambda}) Z_{\lambda}.
\end{equation*}
\THM{aff} and \LEM{valid} ensure us that the definition is full and correct, while \PRO{center}
(and its proof) shows that indeed $\traff(T)$ belongs to $\Zz(\Aff(\Aa))$ for any $T \in \Aff(\Aa)$.
\end{dfn}

For transparency, let us isolate the uniqueness part of \THM{main} in the following

\begin{lem}{uniq}
If $\tr'\dd \Aff(\Aa) \to \Zz(\Aff(\Aa))$ is a linear mapping which satisfies axioms
\textup{(tr1)--(tr3)} \textup{(}with $\traff$ replaced by $\tr'$\textup{)}, then $\tr' = \traff$.
\end{lem}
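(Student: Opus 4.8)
The plan is to show that any $\tr'$ satisfying (tr1)--(tr3) must agree with $\traff$ on all of $\Aff(\Aa)$, and the natural route is: first pin down $\tr'$ on $\Aa$ using the classical uniqueness of the center-valued trace $\tr_{\Aa}$, then bootstrap to $\Aff(\Aa)$ using the structural representation from \THM{aff} (every affiliated operator is $\sum_{\lambda\in\Lambda} c_\lambda S Z_\lambda$ with $S\in\Aa$ and $\{Z_\lambda\}$ a partition). So I would carry out the argument in three stages.

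\emph{Stage 1: $\tr'$ restricted to $\Aa$ equals $\tr_{\Aa}$.} The restriction $\tr'\bigr|_{\Aa}$ is a linear map $\Aa\to\Zz(\Aff(\Aa))$, but I first need to know it actually lands in $\Aa$, or better in $\Zz(\Aa)$. For a non-negative $A\in\Aa$ with $\|A\|\leqsl1$, property (tr1) gives $\tr'(A)\geqsl0$ and (tr3) applied to $Z=I$ together with linearity gives $\tr'(I)=I$, so $\tr'(I-A)=I-\tr'(A)\geqsl0$, whence $0\leqsl\tr'(A)\leqsl I$ as elements of $\Aff(\Aa)$; an element of $\Zz(\Aff(\Aa))=\Aff(\Zz(\Aa))$ (by \PRO{center}) that is positive and bounded above by $I$ is necessarily a bounded operator, hence lies in $\Zz(\Aa)$. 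Scaling and decomposing a general element of $\Aa$ into four such pieces, $\tr'(\Aa)\subset\Zz(\Aa)$. Now $\tr'\bigr|_{\Aa}\dd\Aa\to\Zz(\Aa)$ is linear, positive, satisfies the trace identity (tr2) on $\Aa$, and fixes $\Zz(\Aa)$ pointwise (tr3); a positive linear functional of this type on a von Neumann algebra is automatically bounded, so $\tr'\bigr|_{\Aa}$ is a center-valued trace in the usual sense, and by the classical uniqueness theorem (e.g. Theorem V.2.6 in \cite{ta1}) $\tr'\bigr|_{\Aa}=\tr_{\Aa}$.

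\emph{Stage 2: $\tr'$ respects the partition decomposition.} For a partition $\{Z_\lambda\}_{\lambda\in\Lambda}$ and a bounded family $\{S_\lambda\}\subset\Aa$, I want $\tr'\bigl(\sum_\lambda S_\lambda Z_\lambda\bigr)=\sum_\lambda\tr_{\Aa}(S_\lambda)Z_\lambda$. The key point is that multiplying by a central projection is an operation $\tr'$ must commute with: from (tr2) and (tr3), for any $X\in\Aff(\Aa)$ and any $Z\in\Zz(\Aff(\Aa))$ one has $\tr'(X\cdotaff Z)=\tr'(Z\cdotaff X)$, and since $Z=Z\cdotaff Z'$-type factorizations of projections let one slide the projection through (the standard argument: if $Z$ is a central projection then $\tr'(X\cdotaff Z)=\tr'(Z\cdotaff X\cdotaff Z)=\tr'(X\cdotaff Z)\cdotaff Z$, forcing $\tr'(X\cdotaff Z)$ to be supported under $Z$; applying this to $I-Z$ as well gives $\tr'(X\cdotaff Z)=\tr'(X)\cdotaff Z$). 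Hence, writing $T=\sum_\lambda S_\lambda Z_\lambda$, for each fixed $\mu$ we get $\tr'(T)\cdotaff Z_\mu=\tr'(T\cdotaff Z_\mu)=\tr'(S_\mu Z_\mu)=\tr'(S_\mu)\cdotaff Z_\mu=\tr_{\Aa}(S_\mu)Z_\mu$ using Stage 1 and (the $\Aa$-case of) the sliding identity. Summing over $\mu$ (using that $\{Z_\lambda\}$ is a partition and that both sides lie in $\Aff(\Zz(\Aa))$ where such strongly convergent sums make sense via \LEM{block}) gives $\tr'(T)=\sum_\lambda\tr_{\Aa}(S_\lambda)Z_\lambda=\traff(T)$.

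\emph{Stage 3: conclude.} By \THM{aff}, every $T\in\Aff(\Aa)$ is of the form $\sum_{\lambda\in\Lambda}c_\lambda S Z_\lambda=\sum_{\lambda\in\Lambda}S_\lambda Z_\lambda$ with $S_\lambda=c_\lambda S\in\Aa$, so Stage 2 applies to give $\tr'(T)=\traff(T)$; since $T$ was arbitrary, $\tr'=\traff$. The main obstacle I anticipate is Stage 2 — specifically the justification that $\tr'$ slides central projections through multiplication and that the resulting termwise identities may be reassembled into the strongly convergent series defining $\traff(T)$; this requires care because $\tr'$ is only assumed linear and order/trace-compatible, with no a priori continuity on the unbounded algebra, so one must lean on the central-projection localization (which reduces everything to the bounded algebra $\Aa Z_\lambda$, where Stage 1 is available) rather than on any limiting argument directly in $\Aff(\Aa)$.
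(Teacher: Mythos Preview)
Your Stage~1 is fine and matches the paper's eventual reduction to the uniqueness of $\tr_{\Aa}$. The gap is in Stage~2, specifically in your ``standard argument'' for the sliding identity $\tr'(X\cdotaff Z)=\tr'(X)\cdotaff Z$. You write
\[
\tr'(Z\cdotaff X\cdotaff Z)=\tr'(X\cdotaff Z)\cdotaff Z,
\]
but this equality is precisely an instance of (tr5), which is \emph{not} among the axioms (tr1)--(tr3) assumed for $\tr'$. Axiom (tr2) only lets you cycle factors inside the argument of $\tr'$; when the factor is central that yields nothing, and it certainly does not let you pull a central factor \emph{outside} of $\tr'$. Thus the localization step $\tr'(T)\cdotaff Z_{\mu}=\tr'(T\cdotaff Z_{\mu})$ for unbounded $T$ is unjustified, and with it the passage from Stage~1 to the unbounded case collapses. (For bounded $T$ the identity does follow, via Stage~1 and the known $\Zz(\Aa)$-linearity of $\tr_{\Aa}$ --- but that is exactly the bounded case you already have, so nothing is gained.) You flag Stage~2 as the likely obstacle, and it is; however the specific fix you propose does not work.

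The paper's proof avoids this problem by \emph{not} attempting to localize. Instead, for a fixed partition $\{Z_n\}$ it sets
\[
f(S)\;=\;\tr'\Bigl(\sum_{n} nSZ_n\Bigr)\cdotaff\Bigl(\sum_{n}\tfrac1n Z_n\Bigr)
\]
and checks directly that $f\dd\Aa\to\Zz(\Aa)$ satisfies the axioms of a center-valued trace on $\Aa$. The crucial trace identity $f(S_1S_2)=f(S_2S_1)$ comes from applying (tr2) in the \emph{unbounded} algebra to the factorization $\sum_n nS_1S_2Z_n=\bigl(\sum_n\sqrt{n}\,S_1Z_n\bigr)\cdotaff\bigl(\sum_n\sqrt{n}\,S_2Z_n\bigr)$. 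Uniqueness then gives $f=\tr_{\Aa}$, and multiplying back by the invertible central element $\sum_n nZ_n$ yields $\tr'=\traff$ on all operators of the form $\sum_n nSZ_n$, which by \THM{aff} is everything. The point is that post-multiplying by an \emph{invertible} central element, rather than by a projection, requires no module-linearity of $\tr'$ whatsoever.
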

\begin{proof}
Fix a partition $\{Z_n\}_{n=1}^{\infty}$ in $\Aa$ and consider the map
\begin{equation*}
f\dd \Aa \ni S \mapsto \tr'\Bigl(\sum_{n=1}^{\infty} n S Z_n\Bigr) \cdotaff
\Bigl(\sum_{n=1}^{\infty} \frac1n Z_n\Bigr) \in \Zz(\Aff(\Aa)).
\end{equation*}
It is clear that $f$ is linear. Moreover, for any $S_1, S_2 \in \Aa$, using (tr2), we get
\begin{align*}
f(S_1S_2) &= \tr'\Bigl(\sum_{n=1}^{\infty} n S_1S_2 Z_n\Bigr) \cdotaff \Bigl(\sum_{n=1}^{\infty}
\frac1n Z_n\Bigr)\\
&= \tr'\Bigl(\sum_{n=1}^{\infty} \sqrt{n} S_1 Z_n \cdotaff \sum_{n=1}^{\infty} \sqrt{n} S_2
Z_n\Bigr) \cdotaff \Bigl(\sum_{n=1}^{\infty} \frac1n Z_n\Bigr)\\
&= \tr'\Bigl(\sum_{n=1}^{\infty} \sqrt{n} S_2 Z_n \cdotaff \sum_{n=1}^{\infty} \sqrt{n} S_1
Z_n\Bigr) \cdotaff \Bigl(\sum_{n=1}^{\infty} \frac1n Z_n\Bigr)\\
&= \tr'\Bigl(\sum_{n=1}^{\infty} n S_2S_1 Z_n\Bigr) \cdotaff \Bigl(\sum_{n=1}^{\infty} \frac1n
Z_n\Bigr) = f(S_2S_1).
\end{align*}
Further, if $S \in \Aa$ in non-negative, then $T \df \sum_{n=1}^{\infty} n S Z_n$ is non-negative
as well (by ($\Sigma$5)). Consequently, $\tr'(T)$ is non-negative and therefore so is $f(S)$. Also
for $C \in \Zz(\Aa)$ we have $\sum_{n=1}^{\infty} n C Z_n \in \Aff(\Zz(\Aa)) = \Zz(\Aff(\Aa))$ (cf.
the proof of \PRO{center}), thus, thanks to (tr3),
\begin{equation*}
f(C) = \sum_{n=1}^{\infty} n C Z_n \cdotaff \sum_{n=1}^{\infty} \frac1n Z_n = \sum_{n=1}^{\infty} C
Z_n = C.
\end{equation*}
Finally, we claim that $f(S)$ is bounded for any $S \in \Aa$. (This will imply that $f(\Aa) \subset
\Zz(\Aff(\Aa)) \cap \Aa = \Zz(\Aa)$.) To see tee this, it is enough to assume that $S \in \Aa$ is
non-negative. Then the operator $\|S\| I - S$ is non-negative as well and hence both $f(S)$ and
$f(\|S\|I-S)$ are non-negative. But $f(S) \plusaff f(\|S\|I-S) = f(\|S\|I) = \|S\|I$. Consequenty,
$f(S)$ is bounded, as we claimed.\par
As $f\dd \Aa \to \Zz(\Aa)$ satisfies all axioms of the center-valued trace in $\Aa$ (cf. e.g.
Theorem~8.2.8 in \cite{kr2}), we have $f = \tr_{\Aa}$ and consequently for each $S \in \Aa$:
\begin{multline*}
\tr'\Bigl(\sum_{n=1}^{\infty} n S Z_n\Bigr) = f(S) \cdotaff \sum_{n=1}^{\infty} n Z_n = \tr_{\Aa}(S)
\cdotaff \sum_{n=1}^{\infty} n Z_n = \sum_{n=1}^{\infty} n \tr_{\Aa}(S) Z_n\\
= \traff\Bigl(\sum_{n=1}^{\infty} n S Z_n\Bigr).
\end{multline*}
Since the partition was arbitrary, an application of \THM{aff} completes the proof.
\end{proof}

\begin{proof}[Proof of \THM{main}]
As we announced, property (tr6) shall be established in the next section. The linearity of $\traff$
follows from \COR{common} and the very definition of $\traff$ (see also \REM{alg} and \LEM{valid}).
Conditions (tr1) and (tr4) are immediate consequences of ($\Sigma$5). Property (tr3) follows from
the fact that each $C \in \Zz(\Aff(\Aa))$ may be written in the form $C = \sum_{n=1}^{\infty} n W
Z_n$ where $W \in \Zz(\Aa)$ and $\{Z_n\}_{n=1}^{\infty}$ is a partition (see the proof
of \PRO{center}). Further, (tr2) and (tr5) are implied by suitable properties of $\tr_{\Aa}$ and
the way the multiplication in $\Aff(\Aa)$ is defined (below we use \COR{common} with $\Lambda =
\{\nu = (\nu_1,\nu_2)\dd\ \nu_1,\nu_2 \geqsl 1\}$): if $T = \sum_{\nu\in\Lambda} \nu_1 S Z_{\nu}$
and $X = \sum_{\nu\in\Lambda} \nu_2 Y Z_{\nu}$ (with $Y \in \Zz(\Aa)$ provided $X \in
\Zz(\Aff(\Aa))$), then $T \cdotaff X = \sum_{\nu\in\Lambda} \nu_1 \nu_2 S Y Z_{\nu}$ and hence
\begin{equation*}
\traff(T \cdotaff X) = \sum_{\nu\in\Lambda} \nu_1 \nu_2 \tr_{\Aa}(SY) Z_{\nu}
= \sum_{\nu\in\Lambda} \nu_1 \nu_2 \tr_{\Aa}(YS) Z_{\nu} = \traff(X \cdotaff T);
\end{equation*}
and if $X \in \Zz(\Aff(\Aa))$, we get
\begin{equation*}
\traff(T \cdotaff X) = \sum_{\nu\in\Lambda} \nu_1 \nu_2 \tr_{\Aa}(S)Y Z_{\nu}
= \sum_{\nu\in\Lambda} \nu_1 \tr_{\Aa}(S) Z_{\nu} \cdotaff \sum_{\nu\in\Lambda} \nu_2 Y Z_{\nu}
= \traff(T) \cdotaff X.
\end{equation*}
Finally, uniqueness of $\traff$ was already established in \LEM{uniq} and \eqref{eqn:Z} is just
the assertion of \PRO{center}.
\end{proof}

It is worth noting that $\traff(S) = \tr_{\Aa}(S)$ for each $S \in \Aa$, the proof of which is left
as a simple exercise.\par
As an immediate consequence of \THM{main}, we get the following

\begin{cor}{heisenberg}
Suppose that $\Aa$ is finite and type~I. There are no $X, Y \in \Aff(\Aa)$ such that $X \cdotaff Y
\minusaff Y \cdotaff X = I$.
\end{cor}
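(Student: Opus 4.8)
The plan is to derive \COR{heisenberg} directly from the trace axioms in \THM{main}, in complete analogy with the classical argument that no bounded operators satisfy the canonical commutation relation in a finite von Neumann algebra. Suppose, for contradiction, that $X, Y \in \Aff(\Aa)$ satisfy $X \cdotaff Y \minusaff Y \cdotaff X = I$. First I would apply $\traff$ to both sides. On the left, using linearity of $\traff$ together with the tracial property (tr2), the two terms cancel: $\traff(X \cdotaff Y) = \traff(Y \cdotaff X)$, so $\traff(X \cdotaff Y \minusaff Y \cdotaff X) = 0$. On the right, since $I \in \Zz(\Aa) \subset \Zz(\Aff(\Aa))$, property (tr3) gives $\traff(I) = I$. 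Hence $0 = I$ in $\Aff(\Aa)$, which is absurd since $\Aff(\Aa)$ is a unital algebra acting on a nonzero Hilbert space (or simply because $I \ne 0$).

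The one point requiring a word of care is that subtraction and linearity in $\Aff(\Aa)$ are genuine algebraic operations — this is exactly what \REM{alg} and \THM{aff} guarantee — so the manipulation $\traff(X \cdotaff Y \minusaff Y \cdotaff X) = \traff(X \cdotaff Y) \minusaff \traff(Y \cdotaff X)$ is legitimate; here I use that $\traff$ is linear, which is part of the content of \THM{main}. I would also note that (tr2) is literally stated for all $X, Y \in \Aff(\Aa)$, so no boundedness or domain hypotheses intervene. There is no real obstacle: the entire difficulty has already been absorbed into the construction of $\traff$ and the verification of its properties, so the corollary is a two-line formal consequence. If one wishes, one can instead invoke (tr4): applying $\traff$ to $I$ would have to yield a nonzero element, contradicting the vanishing just obtained — but the cleaner route is simply $\traff(I) = I \ne 0$ from (tr3).

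I would present this as follows.

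\begin{proof}
Suppose, towards a contradiction, that $X, Y \in \Aff(\Aa)$ satisfy $X \cdotaff Y \minusaff Y \cdotaff X = I$. Since $\traff$ is linear (by \THM{main}) and satisfies (tr2), we obtain
\begin{equation*}
\traff(I) = \traff(X \cdotaff Y \minusaff Y \cdotaff X)
= \traff(X \cdotaff Y) \minusaff \traff(Y \cdotaff X) = 0.
\end{equation*}
On the other hand, $I \in \Zz(\Aa) \subset \Zz(\Aff(\Aa))$, so (tr3) yields $\traff(I) = I$. Thus $I = 0$ in $\Aff(\Aa)$, which is impossible.
\end{proof}
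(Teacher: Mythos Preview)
Your proof is correct and follows exactly the paper's own argument, which is simply ``Apply the trace for both sides.'' You have merely spelled out the details that the paper leaves implicit.
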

\begin{proof}
Apply the trace for both sides.
\end{proof}

The above result for arbitrary finite von Neumann algebras was proved in \cite{liu}.

\section{Ordering}

Throughout this section, $\Aa$ continues to be finite and type~I; and $\scalarr$ stands for
the inner product of $\HHh$. Similarly as in $C^*$-algebras, we may distinguish \textit{real} part
of $\Aff(\Aa)$ and introduce a natural ordering in it. To this end, we introduce

\begin{dfn}{ord}
The \textit{real} part $\Aff_s(\Aa)$ of $\Aff(\Aa)$ is the set of all self-adjoint operators
in $\Aff(\Aa)$. Additionally, we put $\Aa_s = \Aa \cap \Aff_s(\Aa)$. For $A \in \Aff_s(\Aa)$
we write $A \geqsl 0$ if $A$ is non-negative (that is, if $\scalar{A \xi}{\xi} \geqsl 0$ for each
$\xi \in \DDd(A)$; or, equivalently, if the spectrum of $A$ is contained in $[0,\infty)$). For two
operators $A_1, A_2 \in \Aff_s(\Aa)$ we write $A_1 \leqsl A_2$ or $A_2 \geqsl A_1$ if $A_1 \minusaff
A_2 \geqsl 0$.\par
The least upper bound (in $\Aff_s(\Aa)$) of a collection $\{B_s\}_{s \in S} \subset \Aff_s(\Aa)$ is
denoted by $\sup_{s \in S} B_s$ provided it exists.
\end{dfn}

The following simple result gives another description of the ordering defined above.

\begin{lem}{ord}
Let $A$ and $B$ be arbitrary members of $\Aff_s(\Aa)$.
\begin{enumerate}[\upshape(a)]
\item If both $A$ and $B$ are non-negative, so is $A \plusaff B$.
\item The following conditions are equivalent:
 \begin{enumerate}[\upshape(i)]
 \item $A \leqsl B$;
 \item $\scalar{A \xi}{\xi} \leqsl \scalar{B \xi}{\xi}$ for any $\xi \in \DDd(A) \cap \DDd(B)$.
 \end{enumerate}
\end{enumerate}
\end{lem}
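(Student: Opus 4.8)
The plan is to reduce everything to the block-diagonal picture provided by \COR{common} and \LEM{block}, where the problem becomes a statement about ordinary (bounded) self-adjoint operators and their quadratic forms. First I would write $A$ and $B$ simultaneously as $A=\sum_{\nu\in\Lambda}\nu_1 S Z_\nu$ and $B=\sum_{\nu\in\Lambda}\nu_2 R Z_\nu$ with $S,R\in\Aa_s$ (possible after passing to a common refinement of the two partitions coming from \THM{aff}, exactly as in \COR{common}); by ($\Sigma$5) self-adjointness of $A,B$ forces $S$ and $R$ to be self-adjoint. Then $A\plusaff B=\sum_{\nu}(\nu_1 S+\nu_2 R)Z_\nu$, and on the dense domain $\EEe=\operatorname{span}\bigcup_\nu\RRr(Z_\nu)$ (a core for all three operators by ($\Sigma$2)) the values of these operators are literally the naive sums. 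For part (a), on a vector $\xi=\sum_\nu Z_\nu\xi$ one computes $\scalar{(A\plusaff B)\xi}{\xi}=\sum_\nu\scalar{(\nu_1 S+\nu_2 R)Z_\nu\xi}{Z_\nu\xi}$, and each summand is $\nu_1\scalar{SZ_\nu\xi}{Z_\nu\xi}+\nu_2\scalar{RZ_\nu\xi}{Z_\nu\xi}\geqsl 0$ because $A,B\geqsl0$ means precisely $S,R\geqsl0$ in $\Aa$ (again by ($\Sigma$5)). Since $\EEe$ is a core for the self-adjoint operator $A\plusaff B$, non-negativity of the form on $\EEe$ upgrades to non-negativity on all of $\DDd(A\plusaff B)$, hence $A\plusaff B\geqsl0$.

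For part (b), the implication (i)$\Rightarrow$(ii) is where one must be slightly careful: $A\leqsl B$ means $B\minusaff A\geqsl0$, and I want to deduce the pointwise form inequality on $\DDd(A)\cap\DDd(B)$, which a priori is a \emph{different} (smaller) domain than $\DDd(B\minusaff A)$. The clean way is to again use the block decomposition: write $C\df B\minusaff A=\sum_\nu(\nu_2 R-\nu_1 S)Z_\nu$; for $\xi\in\DDd(A)\cap\DDd(B)$ one has $\sum_\nu\|\nu_1 S Z_\nu\xi\|^2<\infty$ and $\sum_\nu\|\nu_2 R Z_\nu\xi\|^2<\infty$, so by the triangle inequality $\sum_\nu\|(\nu_2 R-\nu_1 S)Z_\nu\xi\|^2<\infty$, i.e. $\xi\in\DDd(C)$; thus $\DDd(A)\cap\DDd(B)\subseteq\DDd(C)$. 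Moreover on this intersection $\scalar{B\xi}{\xi}-\scalar{A\xi}{\xi}=\sum_\nu\bigl(\nu_2\scalar{RZ_\nu\xi}{Z_\nu\xi}-\nu_1\scalar{SZ_\nu\xi}{Z_\nu\xi}\bigr)=\scalar{C\xi}{\xi}\geqsl0$, the last inequality because $C\geqsl0$ and $\xi\in\DDd(C)$. This gives (ii). For the converse (ii)$\Rightarrow$(i), one restricts the hypothesis to the core $\EEe\subset\DDd(A)\cap\DDd(B)$: for $\xi\in\EEe$ one gets $\scalar{C\xi}{\xi}=\scalar{B\xi}{\xi}-\scalar{A\xi}{\xi}\geqsl0$, and since $\EEe$ is a core for the self-adjoint $C$, this form inequality extends to all of $\DDd(C)$, so $C\geqsl0$, i.e. $A\leqsl B$.

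The main obstacle, and the one point deserving explicit attention, is the domain bookkeeping in (i)$\Rightarrow$(ii): $A\minusaff B$ and $B\minusaff A$ are operators in the $\ast$-algebra sense, whose domains need not contain $\DDd(A)\cap\DDd(B)$ for \emph{general} unbounded $A,B$, so one cannot simply say "$\scalar{(B\minusaff A)\xi}{\xi}\geqsl0$ for all $\xi$". The block representation from \COR{common}/\LEM{block} is exactly what rescues this: it makes the relevant domains transparent as $\ell^2$-type summability conditions, after which the inclusion $\DDd(A)\cap\DDd(B)\subseteq\DDd(B\minusaff A)$ and the additivity of quadratic forms on that set are immediate. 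Everything else — the reductions to self-adjoint $S,R$, non-negativity transfer via ($\Sigma$5), and the core argument promoting form inequalities from $\EEe$ — is routine and parallels the familiar $C^*$-algebra situation.
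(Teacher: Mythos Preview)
Your argument is correct and is essentially an unpacked version of the paper's one-line proof. The paper simply states that $\DDd(A)\cap\DDd(B)$ is a core of the self-adjoint operators $B\minusaff A$ and $A\plusaff B$ and leaves the rest to the reader; your block-diagonal computation via \COR{common} and ($\Sigma$2) is precisely how one verifies this core property (you work with the smaller core $\EEe\subset\DDd(A)\cap\DDd(B)$, which of course suffices), and your explicit check that $\DDd(A)\cap\DDd(B)\subset\DDd(B\minusaff A)$ together with $(B\minusaff A)\xi=B\xi-A\xi$ there is exactly the domain bookkeeping the paper suppresses.
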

\begin{proof}
All properties follow from the fact that $\DDd(A) \cap \DDd(B)$ is a core of self-adjoint operators
$B \minusaff A$ and $A \plusaff B$.
\end{proof}

It is now readily seen that the ordering `$\leqsl$' in $\Aff_s(\Aa)$ is reflexive, transitive and
antisymmetric (which means that $A = B$ provided $A \leqsl B$ and $B \leqsl A$), and that it is
compatible with the linear structure of $\Aff_s(\Aa)$. Another property is established below.

\begin{pro}{commute}
If $A, B \in \Aff_s(\Aa)$ are non-negative and $A \cdotaff B = B \cdotaff A$, then $A \cdotaff B$ is
non-negative as well.
\end{pro}
\begin{proof}
By \COR{common} and ($\Sigma$5), we may express $A$ and $B$ in the forms $A = \sum_{\nu\in\Lambda}
\nu_1 S Z_{\nu}$ and $B = \sum_{\nu\in\Lambda} \nu_2 T Z_{\nu}$ where $S, T \in \Aa$ are
non-negative. Moreover, we know that then
\begin{equation}\label{eqn:aux1}
A \cdotaff B = \sum_{\nu\in\Lambda} \nu_1 \nu_2 ST Z_{\nu}
\end{equation}
and $B \cdotaff A = \sum_{\nu\in\Lambda} \nu_2 \nu_1 TS Z_{\nu}$. We now deduce from these
connections and the assumption that $ST = TS$ and, consequently, $ST \geqsl 0$. Now the assertion
follows from \eqref{eqn:aux1} and ($\Sigma$5).
\end{proof}

For transparency, we isolate a part of (tr6) (in \THM{main}) below.

\begin{lem}{sup}
Let $\AAa = \{A_{\sigma}\}_{\sigma\in\Sigma} \in \Aff_s(\Aa)$ be an increasing net \textup{(}indexed
by a directed set $\Sigma$\textup{)}, bounded above by $A \in \Aff_s(\Aa)$. Then $\AAa$ has a least
upper bound in $\Aff_s(\Aa)$.
\end{lem}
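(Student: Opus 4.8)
The plan is to reduce the statement to the classical monotone completeness of the von Neumann algebra $\Aa$, by truncating everything to the bounded ``corners'' cut out by central projections and then gluing the resulting suprema back together into an affiliated operator via \DEF{series}. First I would normalise: since the net is increasing and $\Sigma$ is directed, fixing $\sigma_0$, passing to the cofinal subnet $\{\sigma\dd\ \sigma\geqsl\sigma_0\}$ and replacing each $A_{\sigma}$ by $A_{\sigma}\minusaff A_{\sigma_0}$ and $A$ by $A\minusaff A_{\sigma_0}$ affects neither the hypotheses nor the existence of a least upper bound (adding a fixed element is an order automorphism of $\Aff_s(\Aa)$, and a cofinal subnet has exactly the same upper bounds). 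Thus we may assume $0\leqsl A_{\sigma}\leqsl A$ for all $\sigma$, with $A\geqsl 0$ and $A_{\sigma_0}=0$. The elementary fact I would use repeatedly is: \emph{if $0\leqsl X\leqsl Y$ in $\Aff_s(\Aa)$ and $Z\in\Zz(\Aa)$ is a projection with $Y\cdotaff Z\in\Aa$, then $X\cdotaff Z\in\Aa$ and $0\leqsl X\cdotaff Z\leqsl Y\cdotaff Z$}; indeed $X\cdotaff Z\geqsl 0$ and $(Y\minusaff X)\cdotaff Z\geqsl 0$ by \PRO{commute} (each being a product of commuting non-negative operators), whence $0\leqsl X\cdotaff Z\leqsl Y\cdotaff Z$, and a non-negative member of $\Aff_s(\Aa)$ dominated by a bounded operator is bounded (consider its square root).

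Using \THM{aff} write $A=\sum_{n\geqsl1}nSZ_n$ with $S\in\Aa$ and $\{Z_n\}$ a partition, and set $W_n\df Z_1\plusaff\dots\plusaff Z_n\in\Zz(\Aa)$, so $W_n\nearrow I$ strongly and $A\cdotaff W_n\in\Aa$. By the italicised fact, for each $n$ the net $(A_{\sigma}\cdotaff W_n)_{\sigma}$ lies in $\Aa$, is increasing, and is bounded above by $A\cdotaff W_n$; by monotone completeness of the von Neumann algebra $\Aa$ it has a supremum $B_n\in\Aa$, towards which it converges in the strong operator topology, with $0\leqsl B_n\leqsl A\cdotaff W_n$. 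Multiplying the convergence by the smaller central projection yields $B_m=B_nW_m$ for $m\leqsl n$, hence $B_kZ_k=B_nZ_k$ for $k\leqsl n$. Put $B\df\sum_{n\geqsl1}B_nZ_n\in\Aff(\Aa)$ (\DEF{series}, together with ($\Sigma$3) and ($\Sigma$4)); a short computation using $\RRr(W_n)\subset\DDd(B)$ shows $B\cdotaff W_n=B_n$ for all $n$, and $B$ is self-adjoint.

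It remains to verify $B=\sup_{\sigma}A_{\sigma}$. For the upper-bound part, fix $\sigma$ and apply \COR{common} to $A,A_{\sigma},B$, \emph{using} the above decomposition of $A$, to get a partition $\{Z'_{\mu}\}_{\mu\in\Lambda}$ refining $\{Z_n\}$ (so $\RRr(Z'_{\mu})\subset\RRr(W_{\mu_1})$) for which $\operatorname{span}\bigcup_{\mu}\RRr(Z'_{\mu})$ is a common core for $A_{\sigma}$, $B$ and $B\minusaff A_{\sigma}$ (\REM{alg}). Since $W_{\mu_1}\in\Zz(\Aa)\subset\Aa'$, it leaves $\DDd(A_{\sigma})$ invariant and $A_{\sigma}$ acts on $\RRr(W_{\mu_1})$ precisely as the bounded operator $A_{\sigma}\cdotaff W_{\mu_1}$, while $B$ acts there as $B\cdotaff W_{\mu_1}=B_{\mu_1}$; as $A_{\sigma}\cdotaff W_{\mu_1}\leqsl B_{\mu_1}$ and the subspaces $\RRr(Z'_{\mu})$ are mutually orthogonal and invariant under the operators in play, we get $\scalar{(B\minusaff A_{\sigma})\xi}{\xi}\geqsl 0$ on the core, i.e. $A_{\sigma}\leqsl B$. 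For the least-upper-bound part, let $C\in\Aff_s(\Aa)$ be any upper bound (so $C\geqsl A_{\sigma_0}=0$) and apply \COR{common} to $A,B,C$, again refining $\{Z_n\}$, to obtain a common core $\operatorname{span}\bigcup_{\mu}\RRr(Z'_{\mu})$ for $B$, $C$ and $C\minusaff B$. For $\xi\in\RRr(Z'_{\mu})\subset\RRr(W_{\mu_1})\cap\DDd(C)$ one has $\xi\in\DDd(A_{\sigma})$, $\scalar{A_{\sigma}\xi}{\xi}\leqsl\scalar{C\xi}{\xi}$ and $\scalar{A_{\sigma}\xi}{\xi}=\scalar{(A_{\sigma}\cdotaff W_{\mu_1})\xi}{\xi}\to\scalar{B_{\mu_1}\xi}{\xi}=\scalar{B\xi}{\xi}$, so $\scalar{B\xi}{\xi}\leqsl\scalar{C\xi}{\xi}$ on that core, i.e. $B\leqsl C$.

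The analytic heart --- monotone completeness of $\Aa$ on the corners $\RRr(W_n)$ --- is classical; the main obstacle is the ``soft'' bookkeeping needed to pass between bounded corners and unbounded affiliated operators: the invariance $W\DDd(T)\subset\DDd(T)$ for central projections $W$, the identification of $T\bigr|_{\RRr(W)}$ with $T\cdotaff W$, and the systematic use of \COR{common} and \REM{alg} to manufacture common cores --- notably for an arbitrary upper bound $C$, which a priori need not be bounded on the corners determined by $A$ --- together with making sure the initial normalisation to $0\leqsl A_{\sigma}$ is legitimate.
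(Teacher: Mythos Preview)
Your proof is correct and follows essentially the same strategy as the paper: normalise to $A_\sigma\geqsl 0$, use \THM{aff} to decompose the upper bound $A$ via a central partition, take suprema on the resulting bounded pieces using monotone completeness of $\Aa$, glue them into an element of $\Aff_s(\Aa)$ via \DEF{series}, and then verify it is the least upper bound. The differences are cosmetic: you work with the partial sums $W_n=Z_1+\cdots+Z_n$ rather than with each $Z_k$ separately, and for the comparison with an arbitrary upper bound you invoke \COR{common}/\REM{alg} to produce a common core and argue pointwise (using strong convergence of the bounded increasing nets), whereas the paper writes the upper bound as $\sum_m m B' Z_m'$ and compares on the double partition $\{Z_n Z_m'\}$ using that multiplication by a central projection commutes with $\sup^{\Aa}$---these are equivalent bookkeeping choices.
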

\begin{proof}
First of all, we may and do assume that $A_{\sigma} \geqsl 0$ for any $\sigma \in \Sigma$. (Indeed,
fixing $\sigma_0 \in \Sigma$ and putting $\Sigma' \df \{\sigma \in \Sigma\dd\ \sigma \geqsl
\sigma_0\}$, $\AAa' \df \{A'_{\sigma'}\}_{\sigma'\in\Sigma'}$ with $A_{\sigma'}' \df A_{\sigma}
\minusaff A_{\sigma_0}$ and $A' \df A \minusaff A_{\sigma_0}$, it is easy to verify that $\AAa'$ is
an increasing net of non-negative operators upper bounded by $A'$, and $\sup_{\sigma\in\Sigma}
A_{\sigma} = A_{\sigma_0} \plusaff \sup_{\sigma'\in\Sigma'} A_{\sigma'}'$.) Using \THM{aff} and
($\Sigma$5), we may express $A$ in the form $A = \sum_{n=1}^{\infty} n B Z_n$ where $B \in \Aa$ is
non-negative and $\{Z_n\}_{n=1}^{\infty}$ is a partition in $\Aa$. Fix $k \geqsl 1$. It follows from
\PRO{commute} that the operators $(A \minusaff A_{\sigma}) \cdotaff Z_k$ and $A_{\sigma} \cdotaff
Z_k$ are non-negative for any $\sigma \in \Sigma$. So, $0 \leqsl A_{\sigma} \cdotaff Z_k \leqsl
A \cdotaff Z_k$. Since $A \cdotaff Z_k = k B Z_k$ is bounded, we now conclude (e.g. from \LEM{ord})
that $A_{\sigma} \cdotaff Z_k$ is bounded as well. Moreover, the same argument shows that the net
$\{A_{\sigma} \cdotaff Z_k\}_{\sigma\in\Sigma} \subset \Aa_s$ is increasing and upper bounded
by $A \cdotaff Z_k \in \Aa_s$. From a classical property of von Neumann algebras we infer that this
last net has a least upper bound in $\Aa_s$, say $G_k$. We now put $G \df \sum_{k=1}^{\infty} G_k
Z_k$. Note that $G \in \Aff_s(\Aa)$ (see ($\Sigma$4) and ($\Sigma$5)). Since $0 \leqsl A_{\sigma}
\cdotaff Z_k) \leqsl G_k \leqsl A \cdotaff Z_k = (A \cdotaff Z_k) Z_k$, we have $G_k = G_k Z_k
(= G \cdotaff Z_k)$ and $A_{\sigma} \cdotaff Z_k = (A_{\sigma} \cdotaff Z_k) Z_k$, and thus
$A_{\sigma} \cdotaff Z_k \leqsl G \cdotaff Z_k \leqsl A \cdotaff Z_k$ for any $\sigma \in \Sigma$
and $k \geqsl 1$. These inequalities imply that
\begin{equation}\label{eqn:G}
A_{\sigma} \leqsl G \leqsl A \qquad (\sigma\in\Sigma)
\end{equation}
(because for $X \in \{A_{\sigma},G,A\}$, $X = \sum_{k=1}^{\infty} (X \cdotaff Z_k) Z_k$ in the sense
of \DEF{series}; then apply \LEM{ord}). We shall check that $G = \sup_{\sigma\in\Sigma} A_{\sigma}$.
To this end, take an arbitrary upper bound $A' = \sum_{n=1}^{\infty} n B' Z_n'$ (where $B' \in \Aa$
is self-adjoint) of $\AAa$. It remains to check that $G \leqsl A'$. In what follows, to avoid
misunderstandings, `$\sup^{\Aa}$' will stand for the least upper bound \underline{in $\Aa_s$}
of suitable families of bounded operators.\par
For arbitrary positive $n$ and $m$ we have $0 \leqsl A_{\sigma} \cdotaff (Z_n Z_m') \leqsl A'
\cdotaff (Z_n Z_m') = m B' Z_n Z_m' \in \Aa_s$ which yields
\begin{multline*}
G \cdotaff (Z_n Z_m') = G_n Z_m' = [\sup_{\sigma\in\Sigma}{\!}^{\Aa} (A_{\sigma} \cdotaff Z_n)] Z_m'
= \sup_{\sigma\in\Sigma}{\!}^{\Aa} [(A_{\sigma} \cdotaff Z_n) Z_m']\\
= \sup_{\sigma\in\Sigma}{\!}^{\Aa} [A_{\sigma} \cdotaff (Z_n Z_m')] \leqsl A' \cdotaff (Z_n Z_m').
\end{multline*}
Now, as before, it suffices to note that $X = \sum_{n=1}^{\infty} \sum_{m=1}^{\infty} (X \cdotaff
(Z_n Z_m')) Z_n Z_m'$ for $X \in \{G,A'\}$ and then apply \LEM{ord}.
\end{proof}

The argument presented above contains a proof of the following convenient property.

\begin{cor}{=}
If $\TTt$ is an increasing net in $\Aa_s$ which is upper bounded in $\Aa_s$, then its least upper
bounds in $\Aa_s$ and $\Aff_s(\Aa)$ coincide.
\end{cor}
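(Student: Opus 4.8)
The plan is to derive the corollary from \LEM{sup} together with one elementary observation: a self-adjoint operator in $\Aff(\Aa)$ that is sandwiched between two bounded operators is itself bounded. Write $G'$ for the least upper bound of $\TTt$ computed in $\Aa_s$ — this exists by the classical monotone-convergence property of von Neumann algebras, already invoked in the proof of \LEM{sup} — and $G''$ for the least upper bound of $\TTt$ computed in $\Aff_s(\Aa)$, which exists by \LEM{sup} applied to $\TTt$ and its upper bound, both lying in $\Aa_s \subset \Aff_s(\Aa)$. The aim is to show $G' = G''$; note each is the \emph{unique} lub of its kind, by antisymmetry of the respective orderings.

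First I would dispose of the easy inequality. For $A_1, A_2 \in \Aa_s$ the relation $A_1 \leqsl A_2$ in the sense of $\Aff_s(\Aa)$ reads $A_1 \minusaff A_2 \geqsl 0$, and since both operators are bounded this is ordinary subtraction, so the condition is exactly the usual order in $\Aa_s$; thus the two orderings agree on $\Aa_s$. In particular $G'$ is an upper bound of $\TTt$ inside $\Aff_s(\Aa)$, whence $G'' \leqsl G'$.

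The substantive step is the reverse inequality, whose crux is that $G''$ is bounded, i.e. $G'' \in \Aa_s$. Pick any member $T_0$ of the net $\TTt$; then $T_0 \leqsl G'' \leqsl G'$, so $0 \leqsl H \leqsl K$ where $H \df G'' \minusaff T_0 \in \Aff_s(\Aa)$ and $K \df G' \minusaff T_0$ is non-negative and bounded. By \LEM{ord}(b) (and $\DDd(K) = \HHh$), $0 \leqsl \scalar{H\xi}{\xi} \leqsl \scalar{K\xi}{\xi} \leqsl \|K\|\,\|\xi\|^2$ for every $\xi \in \DDd(H)$; since $H$ is self-adjoint and its domain is a core of it, the spectral theorem forces $H$ to be bounded with $\|H\| \leqsl \|K\|$. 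Hence $G'' = H \plusaff T_0 \in \Aa_s$, so $G''$ is an upper bound of $\TTt$ in $\Aa_s$, and as $G'$ is the least such, $G' \leqsl G''$. Antisymmetry of `$\leqsl$' on $\Aff_s(\Aa)$ now gives $G' = G''$.

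I do not anticipate any real obstacle here; the only point needing a little care is the boundedness of $G''$, which via \LEM{ord} reduces to the elementary fact about quadratic forms of self-adjoint operators cited above. An alternative, even shorter route — the one hinted at in the text — is to re-run the proof of \LEM{sup} in the present special case: since the upper bound is already bounded, one may take there the representation $A = \sum_{n=1}^{\infty} n B Z_n$ (cf. \THM{aff}) with $B = A$ and the one-term partition $Z_1 = I$, $Z_n = 0$ for $n \geqsl 2$; then the operator $G$ constructed in that proof is visibly $\sup\TTt$ computed in $\Aa_s$, while the proof itself shows it to be $\sup\TTt$ computed in $\Aff_s(\Aa)$ (the harmless preliminary reduction to non-negative members being made first, as in the proof of \LEM{sup}).
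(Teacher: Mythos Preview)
Your proposal is correct. Your primary argument---showing $G'' \leqsl G'$ trivially and then recovering $G' \leqsl G''$ by sandwiching $G''$ between two bounded operators and invoking the quadratic-form bound to force $G'' \in \Aa_s$---is a clean, self-contained route that does not appeal to the representation machinery of \THM{aff}. The paper, by contrast, gives no separate proof: it simply remarks that the argument of \LEM{sup} already contains one, which is precisely your ``alternative, even shorter route'' (specialize to the trivial partition $Z_1 = I$, so that the $G$ built there is literally the supremum in $\Aa_s$, and the remainder of that proof shows it is also the supremum in $\Aff_s(\Aa)$). Your main argument has the virtue of being independent of the structure theory and would work in any ordered setting where a self-adjoint affiliated operator dominated by a bounded one is bounded; the paper's implicit route has the virtue of needing no new ideas at all. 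One cosmetic remark: the phrase ``its domain is a core of it'' is tautological and can be dropped---what you use is that $0 \leqsl \scalar{H\xi}{\xi} \leqsl \|K\|\,\|\xi\|^2$ on all of $\DDd(H)$ forces the spectrum of the non-negative self-adjoint $H$ into $[0,\|K\|]$.
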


We need one more simple lemma.

\begin{lem}{decomp}
Let $T$ be any member of $\Aff(\Aa)$ and $\{Z_{\lambda}\}_{\lambda\in\Lambda}$ be a partition
in $\Aa$. If $T_{\lambda} \df T \cdotaff Z_{\lambda}$ is a bounded operator for any $\lambda \in
\Lambda$, then $T_{\lambda} Z_{\lambda} = T_{\lambda}$ for all $\lambda \in \Lambda$ and $T =
\sum_{\lambda\in\Lambda} T_{\lambda} Z_{\lambda}$.
\end{lem}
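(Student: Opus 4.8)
The plan is to handle the two assertions in turn. The first is purely formal: since $Z_\lambda$ is a projection in $\Aa$, it satisfies $Z_\lambda\cdotaff Z_\lambda=Z_\lambda$ (the operations of $\Aff(\Aa)$ restrict to the ordinary ones on $\Aa$), so associativity of `$\cdotaff$' (see \REM{alg}) gives
\[
T_\lambda\cdotaff Z_\lambda=(T\cdotaff Z_\lambda)\cdotaff Z_\lambda=T\cdotaff(Z_\lambda\cdotaff Z_\lambda)=T\cdotaff Z_\lambda=T_\lambda .
\]
As $T_\lambda$ is bounded and affiliated with $\Aa$, it lies in $\Aa$, and therefore $T_\lambda\cdotaff Z_\lambda$ is just the operator product $T_\lambda Z_\lambda$; hence $T_\lambda Z_\lambda=T_\lambda$.

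For the second assertion I would first establish the auxiliary fact --- which I expect to be the one mildly delicate point --- that for any $S\in\Aa$ the product $T\cdotaff S$ coincides with the operator $C_S$ acting by $C_S\xi=T(S\xi)$ on $\DDd(C_S)\df\{\xi\in\HHh\dd\ S\xi\in\DDd(T)\}$ (this $C_S$ is closed, densely defined and, since any unitary of $\Aa'$ commutes with $S$ and reduces $T$, affiliated with $\Aa$). Indeed, by \COR{common} one may write $T$ and $S$ over a common partition $\{W_\nu\}_{\nu}$; on the linear span of $\bigcup_\nu\RRr(W_\nu)$, which by \REM{alg} is a core for $T\cdotaff S$, the product $T\cdotaff S$ visibly acts as $C_S$, so $T\cdotaff S\subseteq C_S$, and the two coincide because an operator in $\Aff(\Aa)$ has no proper extension inside $\Aff(\Aa)$ (\REM{alg}). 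Applying this with $S=Z_\lambda$: since $T\cdotaff Z_\lambda=T_\lambda$ is bounded, $C_{Z_\lambda}$ is everywhere defined, i.e.\ $\RRr(Z_\lambda)\subseteq\DDd(T)$, and $T\xi=T_\lambda\xi$ for every $\xi\in\RRr(Z_\lambda)$.

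Now set $R\df\sum_{\lambda\in\Lambda}T_\lambda Z_\lambda$. Each $T_\lambda$ belongs to $\Aa$, so $R\in\Aff(\Aa)$ by ($\Sigma$4), and by ($\Sigma$2) the linear span $\EEe$ of $\bigcup_{\lambda\in\Lambda}\RRr(Z_\lambda)$ is a core for $R$. For $\xi\in\RRr(Z_\mu)$ one has $Z_\lambda\xi=\delta_{\lambda\mu}\xi$, whence $R\xi=T_\mu\xi$ by \DEF{series}; on the other hand $\xi\in\DDd(T)$ and $T\xi=T_\mu\xi$ by the previous paragraph. Thus $T$ and $R$ agree on $\EEe$ and $\EEe\subseteq\DDd(T)$; since $\EEe$ is a core for $R$ and $T$ is closed, $R=\overline{R|_{\EEe}}=\overline{T|_{\EEe}}\subseteq T$, and so $R=T$ once more by \REM{alg}.

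An alternative route to the second assertion bypasses the auxiliary fact and uses \LEM{block}: central projections reduce affiliated operators, so each $Z_\lambda$ reduces $T$, and with the unitary $U$ of \LEM{block} one checks, by transporting everything through the $\bB$-transform (which commutes with $U$ and with direct sums by ($\bB$4) and ($\bB$5), and is injective by ($\bB$1)), that $UTU^{-1}=\bigoplus_{\lambda}T\bigr|_{\RRr(Z_\lambda)}=U\bigl(\sum_{\lambda}T_\lambda Z_\lambda\bigr)U^{-1}$, whence $T=\sum_{\lambda}T_\lambda Z_\lambda$.
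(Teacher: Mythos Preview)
Your proof is correct. The first assertion is handled exactly as in the paper. For the second, your route differs from the paper's: the paper invokes \THM{aff} to write $T=\sum_{n} nSW_n$ for some $S\in\Aa$ and a partition $\{W_n\}$, then verifies directly that $T_\lambda=\sum_n nS(W_nZ_\lambda)$ and regroups the double sum $\sum_\lambda\sum_n nS(W_nZ_\lambda)$ back to $T$. Your argument instead isolates the operator-theoretic identification $T\cdotaff S=C_S$ for $S\in\Aa$, deduces $\RRr(Z_\lambda)\subset\DDd(T)$ from boundedness of $T_\lambda$, and then compares $T$ and $\sum_\lambda T_\lambda Z_\lambda$ on the common core $\EEe$, finishing with the ``no proper extensions in $\Aff(\Aa)$'' principle of \REM{alg}. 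The paper's computation is shorter and stays entirely within the series calculus set up in Section~3; your approach is slightly longer but yields the useful intermediate fact $T\cdotaff S=C_S$ (and the inclusion $\RRr(Z_\lambda)\subset\DDd(T)$) explicitly, which clarifies why the formal product agrees with the naive one. Your alternative via the $\bB$-transform is also sound, though it implicitly relies on the same inclusion $\RRr(Z_\lambda)\subset\DDd(T)$ to identify $T\bigr|_{\RRr(Z_\lambda)}$ with $T_\lambda\bigr|_{\RRr(Z_\lambda)}$.
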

\begin{proof}
Since $T_{\lambda}$ is bounded, we get $T_{\lambda} Z_{\lambda} = T_{\lambda} \cdotaff Z_{\lambda}
= T \cdotaff Z_{\lambda} = T_{\lambda}$. Express $T$ in the form $T = \sum_{n=1}^{\infty} n S W_n$
with $S \in \Aa$ and $W_n \in \Zz(\Aa)$. Then $T_{\lambda} = \sum_{n=1}^{\infty} T_{\lambda} W_n =
\sum_{n=1}^{\infty} (T \cdotaff Z_{\lambda}) \cdotaff W_n = \sum_{n=1}^{\infty} (T \cdotaff W_n)
\cdotaff Z_{\lambda} = \sum_{n=1}^{\infty} n B (W_n Z_{\lambda})$ and hence
\begin{equation*}
\sum_{\lambda\in\Lambda} T_{\lambda} Z_{\lambda} = \sum_{\lambda\in\Lambda} \sum_{n=1}^{\infty}
nB (W_n Z_{\lambda}) = \sum_{n=1}^{\infty} \sum_{\lambda\in\Lambda} nB (W_n Z_{\lambda})
= \sum_{n=1}^{\infty} nB W_n = T
\end{equation*}
and we are done.
\end{proof}

Now we are ready to give

\begin{proof}[Proof of item \textup{(tr6)} in \THM{main}]
We already know from \LEM{sup} and (tr1) that both $A \df \sup_{\sigma\in\Sigma} A_{\sigma}$ and
$A' \df \sup_{\sigma\in\Sigma} \traff(A_{\sigma})$ are well defined. As in the proof of \LEM{sup},
we may and do assume that each operator $A_{\sigma}$ is non-negative. As usual, we express $A$
in the form $A = \sum_{n=1}^{\infty} n B Z_n$ where $B \in \Aa$. Then, from the very definition
of $\traff$ we deduce that $\traff(A) = \sum_{n=1}^{\infty} n \tr_{\Aa}(B) Z_n$. Further, the proof
of \LEM{sup}, combined with \COR{=}, yields that
\begin{equation*}
A = \sum_{n=1}^{\infty} [\sup_{\sigma\in\Sigma} (A_{\sigma} \cdotaff Z_n)] Z_n.
\end{equation*}
Consequently, $n B Z_n = A \cdotaff Z_n = \sup_{\sigma\in\Sigma} (A_{\sigma} \cdotaff Z_n)$. Now
the normality of $\tr_{\Aa}$ implies that $n \tr_{\Aa}(B) Z_n = \sup_{\sigma\in\Sigma}
\tr_{\Aa}(A_{\sigma} \cdotaff Z_n)$. But $\tr_{\Aa}(A_{\sigma} \cdotaff Z_n) = \traff(A_{\sigma})
\cdotaff Z_n$ (see (tr5)). We claim that $\sup_{\sigma\in\Sigma} (\traff(A_{\sigma}) \cdotaff Z_n)
= A' \cdotaff Z_n$. (To convince of that, first note that inequality `$\leqsl$' is immediate. To see
the reverse inequality, denote by $A_1'$ and $A_2'$, respectively, $\sup_{\sigma\in\Sigma}
(A_{\sigma} \cdotaff Z_n)$ and $\sup_{\sigma\in\Sigma} (A_{\sigma} \cdotaff (I - Z_n))$, and observe
that $A_{\sigma} \leqsl A_1' \plusaff A_2'$ and consequently $A' \leqsl A_1' \plusaff A_2'$, from
which one infers that $A' \cdotaff Z_n \leqsl A_1' \cdotaff Z_n \plusaff A_2' \cdotaff Z_n$, but
$A_2' \leqsl A' \cdotaff (I - Z_n)$ and thus $A_2' \cdotaff Z_n = 0$.) These observation lead us
to $A' \cdotaff Z_n = n \tr_{\Aa}(B) Z_n \in \Aa$. So, \LEM{decomp} yields $A' = \sum_{n=1}^{\infty}
n \tr_{\Aa}(B) Z_n = \traff(A)$.
\end{proof}

As we noted in the introductory part, condition (tr6) is a counterpart of normality
(in the terminology of Takesaki; see Definition~2.1 in Chapter~V of \cite{ta1}) of center-valued
traces in finite von Neumann algebras. Thus, the question of whether it is possible to equip
$\Aff(\Aa)$ with a topology (defined \textit{naturally}) with respect to which the center-valued
trace $\traff$ is continuous naturally arises. This will be a subject of further investigations.

\section{\texorpdfstring{Trace-like mappings in $\Aff(\Aa)$ and the type of $\Aa$}
 {Trace-like mappings in Aff(A) and the type of A}}

As \PRO{main} shows, finite type~I von Neumann algebras may be characterised (among all
von Neumann algebras) as those whose (full) sets of affiliated operators admit mappings which
resemble center-valued traces. The aim of the section is to prove \PRO{main}, which we now turn to.

\begin{proof}[Proof of \PRO{main}]
First observe that if $A \in \Aa$, then $\varphi(A)$ is bounded and consequently $\varphi(A) \in
\Aa$. Indeed, it is enough to show this for non-negative $A \in \Aa$. Such $A$ satisfies $\|A\|I-A
\geqsl 0$, therefore $\varphi(\|A\|I-A)$ and $\varphi(A)$ are non-negative (by (b)). But it follows
from (e) and (a) that
\begin{equation*}
\varphi(\|A\|I-A) = \varphi(\|A\|I) - \varphi(A) = \|A\|I - \varphi(A),
\end{equation*}
which means that $0 \leqsl \varphi(A) \leqsl \|A\|I$ and hence $\varphi(A)$ is bounded.
As $\varphi(A)$ commutes with each unitary operator in $\Aa$ (by (f)), we conclude that $\varphi(A)
\in \Zz(\Aa)$ for each $A \in \Aa$. So, $\psi = \varphi\bigr|_{\Aa}\dd \Aa \to \Zz(\Aa)$ is linear
(thanks to (a)) and satisfies all axioms of a center-valued trace (see (b), (d) and (e)), hence
$\Aa$ is finite. Note also that $\varphi(X) = \tr_{\Aa}(X)$ for each $X \in \Aa$.\par
Suppose that $\Aa$ is not type~I. Then one can find a non-zero projection $Z \in \Zz(\Aa)$ such
that $\Aa_0 \df \Aa Z$ is type~$\tII_1$. Recall that $\tr_{\Aa}\bigr|_{\Aa_0}$ coincides with
the center-valued trace $\tr_{\Aa_0}$ of $\Aa_0$.\par
Every type~$\tII_1$ von Neumann algebra $\Ww$ has the following property: for each projection $P \in
\Ww$ and an operator $C \in \Zz(\Ww)$ such that $0 \leqsl C \leqsl \tr_{\Ww}(P)$ there exists
a projection $Q \in \Ww$ for which $Q \leqsl P$ and $\tr_{\Ww}(Q) = C$ (to convince of that, see
Theorem~8.4.4 and item (vii) of Theorem~8.4.3, both in \cite{kr2}). Involving this property,
by induction we define a sequence $(P_n)_{n=1}^{\infty}$ of projections in $\Aa_0$ as follows:
$P_1 \in \Aa_0$ is arbitrary such that $\tr_{\Aa_0}(P_1) = \frac12 Z$; and for $n > 1$, $P_n \in
\Aa_0$ is such that $P_n \leqsl Z - \sum_{k=1}^{n-1} P_k$ and $\tr_{\Aa_0}(P_n) = \frac{1}{2^n} Z$.
Observe that the projections $P_n$, $n \geqsl 1$, are mutually orthogonal and for any $N \geqsl 1$,
\begin{equation*}
\tr_{\Aa}\Bigl(\sum_{k=1}^N 2^k P_k\Bigr) = \sum_{k=1}^N 2^k \tr_{\Aa_0}(P_k) = N Z.
\end{equation*}
Now for $N \geqsl 0$, put $T_N \df \sum_{k=N+1}^{\infty} 2^k P_k$ (the series understood pointwise,
similarly as in \DEF{series}). As each $P_k$ belongs to $\Aa$, we see that $T_N \in \Aff(\Aa_0)$.
What is more, $T_N$ is non-negative and we infer from axiom (c) that
\begin{equation*}
\varphi(T_0) = \varphi\Bigl(\sum_{k=1}^N 2^k P_k\Bigr) + \varphi(T_N) =
\tr_{\Aa}\Bigl(\sum_{k=1}^N 2^k P_k\Bigr) + \varphi(T_N) = N Z + \varphi(T_N).
\end{equation*}
Therefore, for $\xi \in \DDd(\varphi(T_0)) = \DDd(\varphi(T_N))$, we get:
\begin{equation*}
\scalar{\varphi(T_0)\xi}{\xi} = N \|Z \xi\|^2 + \scalar{\varphi(T_N)\xi}{\xi} \geqsl N \|Z \xi\|^2
\end{equation*}
(here $\scalarr$ denotes the inner product in $\HHh$). Since $N$ can be arbitrarily large, the above
implies that $Z \xi = 0$ for every $\xi \in \DDd(\varphi(T_0))$. But this is impossible, because
$Z \neq 0$ and $\DDd(\varphi(T_0))$ is dense in $\HHh$. The proof is complete.
\end{proof}

\end{document}